\newtheorem{thm}{Theorem}[section]
\newtheorem{corollary}[thm]{Corollary}
\newtheorem{lemma}[thm]{Lemma}
\newtheorem{proposition}[thm]{Proposition}
\theoremstyle{definition}
\newtheorem{definition}[thm]{Definition}
\theoremstyle{remark}
\newtheorem{remark}[thm]{Remark}
\DeclareMathOperator{\id}{id}
\DeclareMathOperator{\Ker}{Ker}
\DeclareMathOperator{\m}{\frak{m}}
\DeclareMathOperator{\p}{\frak{p}}
\DeclareMathOperator{\Hom}{Hom}
\DeclareMathOperator{\Ext}{Ext}
\DeclareMathOperator{\Tor}{Tor}
\def\pd{{\rm pd}}
\def\fd{{\rm fd}}
\def\id{{\rm id}}
\def\w-wgldim{{\rm w-w.gl.dim}}
\def\sup{{\rm sup}}
\def\FP{{\rm FP}}
\def\fpd{{\rm fpd}}
\def\fpD{{\rm fpD}}
\begin{document}
	
	\title[$w$-FP-PROJECTIVE MODULES AND DIMENSION]{$w$-FP-PROJECTIVE MODULES AND DIMENSION}

	\author[R.A.K. Assaad]{Refat Abdelmawla Khaled Assaad}
	\address{Department of Mathematics, Faculty of Science, University Moulay Ismail Meknes, Box 11201, Zitoune, Morocco}
	\email{refat90@hotmail.com}
	\author[E. M. Bouba]{El Mehdi Bouba}
	\address{Department of Mathematics and computer, Faculty Multidisciplinary of Nador,University Mohammed first, Morocco}
	\email{mehdi8bouba@hotmail.fr}
	\author[M. Tamekkante]{Mohammed Tamekkante}
	\address{Department of Mathematics, Faculty of Science, University Moulay Ismail Meknes, Box 11201, Zitoune, Morocco}
	\email{tamekkante@yahoo.fr}
	\subjclass[2010]{13D05, 13D07, 13H05}
	\keywords{absolutely pure, absolutely $w$-pure, $w$-flat, $w$-injective, $DW$ rings and domains, $PvMD$s, Krull domainsm}	
	
	\begin{abstract}
		Let $R$ be a ring. An $R$-module $M$ is said to be an absolutely $w$-pure module if and only if  $\Ext^1_R(F,M)$ is a GV-torsion module for any finitely presented module $F$. In this paper, we introduce and study the concept of $w$-FP-projective module which is in some way a generalization of the notion of $\FP$-projective module. An $R$-module $M$ is said to be $w$-$\FP$-projective if $\Ext^1_R(M,N)=0$ for any absolutely $w$-pure module $N$. This new class of modules will be used to characterize (Noetherian)
		$DW$ rings. Hence, we introduce the $w$-$\FP$-projective dimensions of modules and rings. The relations between the introduced dimensions and other (classical) homological dimensions are discussed. Illustrative examples  are given.
	\end{abstract}
	\maketitle
	
	\section{\bf Introduction}
	Throughout, all rings considered are commutative with unity and all modules are unital. Let $R$ be a ring and $M$ be an $R$-module. As usual, we use ${\rm pd}_R(M)$, ${\rm id}_R(M)$, and ${\rm fd}_R(M)$ to denote, respectively, the classical projective dimension, injective dimension, and flat dimension of $M$, and ${\rm wdim}(R)$ and ${\rm gldim}(R)$ to denote, respectively, the weak and global homological dimensions of $R$.\\
	Now, we review some definitions and notation. Let $J$ be an ideal of $R$. Following \cite{HFX}, $J$ is called a \emph{Glaz-Vasconcelos ideal} (a $GV$-ideal for short) if $J$ is finitely generated and the natural homomorphism $\varphi : R \rightarrow J^{\ast} = {\rm Hom}_R(J,R)$ is
	an isomorphism.  Let $M$ be an $R$-module and define
	$${\rm tor}_{GV}(M) = \{x\in M \mid Jx = 0\text{  for some } J\in  GV(R)\},$$
	where  $GV(R)$ is the set of $GV$-ideals of $R$. It is clear that ${\rm tor}_{GV}(M)$ is a submodule of $M$. Now $M$ is said to be $GV$-\emph{torsion} (resp., $GV$-\emph{torsion-free}) if ${\rm tor}_{GV}(M) =M$ (resp., ${\rm tor}_{GV}(M) =0$).   A $GV$-torsion-free module $M$ is called a $w$-\emph{module} if ${\rm Ext}^1_R(R/J, M) =0$ for any $J\in GV(R)$. Projective modules and reflexive modules are $w$-modules. In the recent paper \cite{WANG FLAT}, it was shown that flat modules are $w$-modules. The notion of $w$-modules was introduced firstly over a domain \cite{MCCASLAND} in the study of Strong Mori
	domains and was extended to commutative rings with zero divisors in \cite{HFX}. Let $w$-${\rm Max}(R)$ denote the set of maximal $w$-ideals of $R$, i.e., $w$-ideals of $R$ maximal among proper integral $w$-ideals of $R$. Following  \cite[Proposition 3.8]{HFX}, every maximal $w$-ideal is prime.
	For any $GV$-torsion-free module $M$,
	$$M_{w}:=\{x\in E(M)\mid Jx\subseteq M \text{  for some } J\in  GV(R)\}$$
	is a $w$-submodule of $E(M)$ containing $M$ and is called the $w$-\emph{envelope} of $M$,
	where $E(M)$ denotes the injective hull  of $M$. It is clear that a $GV$-torsion-free module $M$ is a $w$-module if and only if $M_{w}=M$.
	Let $M$ and $N$ be $R$-modules and let $f : M \rightarrow N$ be a homomorphism. Following \cite{FGTYPE},
	$f$ is called a $w$-\emph{monomorphism} (resp., $w$-\emph{epimorphism}, $w$-\emph{isomorphism}) if $f_{\mathfrak{p}} :
	M_{\mathfrak{}}\rightarrow N_{\mathfrak{p}}$ is a monomorphism (resp., an epimorphism, an isomorphism) for all
	$\mathfrak{p}\in w\text{-}{\rm Max}(R)$.  A sequence $0\rightarrow A \rightarrow B \rightarrow C\rightarrow 0$ of $R$-modules is said to be $w$-exact if $0\rightarrow A_{\p} \rightarrow B_{\p} \rightarrow C_{\p}\rightarrow 0$ is exact for any $\p\in$ $w$-${\rm Max}(R)$. An $R$-module $M$ is called a $w$-\emph{flat module} if the induced map $1 \otimes f : M\otimes A \rightarrow M\otimes B$ is a $w$-monomorphism for any $w$-monomorphism $f : A\rightarrow B$. Certainly flat modules are $w$-flat, but the converse implication is not true in general.\\
	Recall from \cite{MADDOX} that an $R$-module $A$ is called absolutely pure if $A$ is a pure submodule in every $R$-module which contains $A$ as a submodule.  C. Megibben showed in \cite{MEGIBBEN}, that an $R$-module $A$ is absolutely pure if and only if $\Ext_R^1(N,A)=0$ for every finitely presented $R$-module $N$. Hence, an absolutely pure module is precisely an $FP$-injective module in \cite{STENSTROM}. For more details about absolutely pure (or $FP$-injective) modules, see  \cite{MADDOX, MEGIBBEN, STENSTROM, JAIN, PINZON}. In a very recent paper\cite{SXFW}, the authors introduced the notion of absolutely $w$-pure modules as generalization of absolutely pure (\FP-injective) modules in the sense of the $w$-operation theory. As in \cite{SXFW1}, a $w$-exact sequence of $R$-modules $\xymatrix{0\rightarrow A\rightarrow B\rightarrow C\rightarrow 0}$ is said to be $w$-pure exact if, for any $R$-module $M$,
	the induced sequence $\xymatrix{0\rightarrow A\otimes M\rightarrow B\otimes M \rightarrow C\otimes M \rightarrow 0}$ is $w$-exact.  In particular,
	if $A$ is a submodule of $B$ and $\xymatrix{0\rightarrow A\rightarrow B\rightarrow B/A\rightarrow 0}$ is a $w$-pure exact sequence of
	$R$-modules, then $A$ is said to be a $w$-pure submodule of $B$.  If $A$ is a $w$-pure submodule in every $R$-module which contains $A$ as a submodule, then $A$ is said to be an absolutely $w$-pure module. Following \cite[Theorem 2.6]{SXFW}, an $R$-module $A$ is absolutely $w$-pure if and only if $\Ext_R^1(N,A)$  is a $GV$-torsion $R$-module for every finitely presented $R$-module $N$. In \cite{MAODING}, Ding and Mao introduced  and studied the notion of $\FP$-projective dimension of modules and rings;  the $\FP$-projective dimension of an $R$-module $M$, denoted by $\fpd_{R}(M)$, is the smallest positive integer $n$ for which $\Ext_R^{n+1}(M,A)=0$ for all absolutely pure ($\FP$-injective) $R$-modules $A$, and $\FP$-projective dimension of $R$, denoted by $\fpD(R)$, is defined as the supremum of the $\FP$-projective dimensions of finitely generated $R$-modules. These dimensions measures how far away a finitely generated module is from being finitely presented, and how far away a ring is from being Noetherian.\\
	In Section $2$, we introduce the concept of $w$-FP-projective modules. Hence, we prove that a ring  $R$ is $DW$ (\cite{AM})  if and only if every FP-projective $R$-module is $w$-FP-projective if and only if every finitely presented $R$-module is $w$-FP-projective, and $R$ is a coherent $DW$ ring if and only if every  finitely generated ideal is $w$-FP-projective.\\
	Section $3$ deals with  the $w$-FP projective dimension of modules and rings. After a routine study of these dimensions, we prove that  $R$ is a Noetherian $DW$ ring if and only if every $R$-module is $w$-FP-projective and $R$ is FP-hereditary $DW$-ring if and only if every submodule of projective $R$-module is $w$-FP-projective.
	
	\section{\bf w-FP-projective modules}
	We start with the following definition.
	\begin{definition}\label{w-FP-proj}
		An $R$-module $M$ is said to be $w$-$\FP$-projective if $\Ext_R^1(M,A)=0$ for any  absolutely $w$-pure $R$-module $A$.
	\end{definition}
	Since every absolutely pure  module is absolutely $w$-pure (\cite[Corollary 2.7]{SXFW}), we have the following inclusions:
	$$\{\textrm{Projective modules}\}\subseteq \{w\textrm{-}\FP\textrm{-projective modules}\}\subseteq \{\FP\textrm{-projective modules}\}$$
	Recall that a ring $R$ is called a $DW$-ring if every ideal of $R$ is a $w$-ideal, or equivalently every maximal ideal of $R$ is $w$-ideal \cite{AM}. Examples of $DW$-rings are Pr\"{u}fer domains, domains with Krull dimension one, and rings with Krull dimension zero. Hence, it is clear that if $R$ is a $DW$-ring, then $w$-$\FP$-projective $R$-modules are just the $\FP$-projective $R$-modules. Moreover, using \cite[Corollary 2.9]{SXFW}, it is easy to see that over a  von Neumann regular ring, the three classes of modules above coincide.\\
	\begin{remark}\label{ENOCHSFP}
		It is proved in \cite{ENOCHS} that a finitely generated $R$-module $M$ is finitely presented if and only if $\Ext_{R}^1(M,A)=0$ for any absolutely pure ($\FP$-injective) $R$-module $A$. Thus, every finitely generated $w\textrm{-}\FP$-projective $R$-module is finitely presented.
	\end{remark}
	We need the following lemma.

	\begin{lemma}\label{GVABS}
		Every $GV$-torsion $R$-module is absolutely $w$-pure.
	\end{lemma}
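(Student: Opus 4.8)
The plan is to reduce the statement to the internal characterisation of absolutely $w$-pure modules recalled above: by \cite[Theorem 2.6]{SXFW}, an $R$-module $A$ is absolutely $w$-pure if and only if $\Ext^1_R(N,A)$ is a $GV$-torsion $R$-module for every finitely presented $R$-module $N$. So, fixing a $GV$-torsion module $M$ together with an arbitrary finitely presented module $N$, what has to be shown is that $\Ext^1_R(N,M)$ is $GV$-torsion.

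First I would pick a finite presentation, i.e.\ a short exact sequence $0\to K\to R^{n}\to N\to 0$ in which $K$ is finitely generated (this is exactly what finite presentability of $N$ provides). Applying $\Hom_R(-,M)$ and using $\Ext^1_R(R^{n},M)=0$ yields an exact sequence $\Hom_R(R^{n},M)\to \Hom_R(K,M)\to \Ext^1_R(N,M)\to 0$, so $\Ext^1_R(N,M)$ is a homomorphic image of $\Hom_R(K,M)$. Since the class of $GV$-torsion modules is closed under homomorphic images, it suffices to prove that $\Hom_R(K,M)$ is $GV$-torsion.

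For that, write $K=Rx_{1}+\dots+Rx_{t}$ and let $f\in\Hom_R(K,M)$. As $M$ is $GV$-torsion, for each $i$ there is $J_{i}\in GV(R)$ with $J_{i}f(x_{i})=0$. The product $J:=J_{1}\cdots J_{t}$ again belongs to $GV(R)$, since $GV(R)$ is closed under finite products \cite{HFX}, and $J\subseteq J_{i}$ for every $i$; hence $Jf(x_{i})=0$ for all $i$, and therefore $af=0$ for every $a\in J$ because each element of $K$ is an $R$-combination of the $x_{i}$. Thus $f\in{\rm tor}_{GV}(\Hom_R(K,M))$, and since $f$ was arbitrary, $\Hom_R(K,M)$ is $GV$-torsion. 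Combining the three steps gives the lemma.

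This is largely a bookkeeping argument and I do not anticipate a real obstacle; the one place where the hypotheses are genuinely used is that the finite generation of $K$ lets one replace the finitely many ($f$-dependent) $GV$-ideals $J_{1},\dots,J_{t}$ by a single $GV$-ideal $J$ annihilating $f$ --- one should not expect a single $GV$-ideal to annihilate the possibly infinitely generated module $M$. Alternatively, one can argue directly from the definition: a $GV$-torsion module $M$ satisfies $M_{\mathfrak{p}}=0$ for every $\mathfrak{p}\in w\text{-}{\rm Max}(R)$ (a $GV$-ideal is not contained in any maximal $w$-ideal), so for every extension $B\supseteq M$ and every $R$-module $L$ the sequence $0\to M\otimes L\to B\otimes L\to (B/M)\otimes L\to 0$ localises at each such $\mathfrak{p}$ to $0\to 0\to (B\otimes L)_{\mathfrak{p}}\xrightarrow{\ \sim\ }((B/M)\otimes L)_{\mathfrak{p}}\to 0$, which is exact; hence $M$ is a $w$-pure submodule of $B$, and so $M$ is absolutely $w$-pure.
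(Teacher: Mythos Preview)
Your argument is correct, and in fact you give two valid proofs. Neither one is quite the route the paper takes, though your alternative is close in spirit.

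The paper also aims at the $\Ext$-characterisation from \cite[Theorem~2.6]{SXFW}, but instead of bounding $\Ext^1_R(N,M)$ from above by $\Hom_R(K,M)$, it localises: for $N$ finitely presented there is a natural monomorphism $\Ext^1_R(N,M)_{\mathfrak p}\hookrightarrow \Ext^1_{R_{\mathfrak p}}(N_{\mathfrak p},M_{\mathfrak p})$ (this is \cite[Proposition~1.10]{FHT}), and since $M$ $GV$-torsion forces $M_{\mathfrak p}=0$ for every $\mathfrak p\in w\text{-}{\rm Max}(R)$, the target---hence the source---vanishes. Then the local criterion for $GV$-torsion gives the conclusion. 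Your main argument avoids this base-change lemma entirely by working directly with a finite presentation and the multiplicativity of $GV(R)$; it is more elementary and self-contained, at the price of being slightly more hands-on. Your alternative argument shares with the paper the key observation $M_{\mathfrak p}=0$, but applies it to the tensor definition of $w$-purity rather than to $\Ext$; this is the shortest of the three and makes the lemma look almost tautological.
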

	\begin{proof}
		Let $A$ be an arbitrary $R$-module and $N$ be a finitely presented $R$-module. For any maximal $w$-ideal $\p$ of $R$, the naturel homomorphism
		$$ \theta:\Hom_{R}(N,A)_{\p}\rightarrow \Hom_{R_{\p}}(N_{\p},A_{\p})$$
		induces a homomorphism
		$$ \theta_{1}: \Ext_{R}^1(N,A)_{\p}\rightarrow \Ext_{R_{\p}}^1(N_{\p},A_{\p})$$
		Following \cite[Proposition 1.10]{FHT}, $\theta_{1}$ is a monomorphism.
		Suppose that $A$ is a $GV$-torsion $R$-module. Then, we get $(\Ext_{R}^1(N,A))_{\p} = 0$ since $A_{\p}=0$ (by \cite[Lemma 0.1]{FHT}).  Hence, $\Ext_{R}^1(N,A)$ is $GV$-torsion (by \cite[Lemma 0.1]{FHT}). Consequently, $A$ is an absolutely $w$-pure $R$-module (by \cite[Theorem 2.6]{SXFW}).
	\end{proof}
	The first main result of this paper characterizes $DW$-rings in terms of $w$-$\FP$-projective $R$-modules.
	\begin{proposition}\label{DW}
		Let $R$ be a ring. Then the following conditions are equivalent:
		\begin{itemize}
			\item [(1)] Every finitely presented $R$-module is $w$-$\FP$-projective.
			\item [(2)] Every $\FP$-projective $R$-module is $w$-$\FP$-projective.
			\item [(3)] $R$ is a $DW$-ring.
		\end{itemize}
	\end{proposition}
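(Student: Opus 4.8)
\noindent\emph{Plan of proof.} The plan is to prove the cycle of implications $(3)\Rightarrow(2)\Rightarrow(1)\Rightarrow(3)$. The first two are essentially formal. For $(3)\Rightarrow(2)$ one invokes that over a $DW$-ring the $w$-operation is trivial, i.e.\ $GV(R)=\{R\}$ (so that the only $GV$-torsion module is $0$); hence "$\Ext_R^1(N,A)$ is $GV$-torsion" is the same as "$\Ext_R^1(N,A)=0$", and by Megibben's theorem the absolutely $w$-pure modules then coincide with the absolutely pure ones, so the classes of $w$-$\FP$-projective and $\FP$-projective modules coincide (this is already recorded after Definition~\ref{w-FP-proj}); in particular every $\FP$-projective module is $w$-$\FP$-projective. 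For $(2)\Rightarrow(1)$ one only needs that a finitely presented module is $\FP$-projective, which is immediate from Megibben's characterization of absolutely pure modules, so $(2)$ upgrades it to $w$-$\FP$-projective.

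The substance is $(1)\Rightarrow(3)$, which I would prove by contraposition. Suppose $R$ is not a $DW$-ring; it is well known that this means $GV(R)\neq\{R\}$, so fix a $GV$-ideal $J\subsetneq R$ and a maximal ideal $\m$ with $J\subseteq\m$. Since $J$ is finitely generated, $R/J$ is finitely presented, hence $w$-$\FP$-projective by $(1)$. On the other hand $R/\m$ is annihilated by $J\in GV(R)$, so it is $GV$-torsion and therefore absolutely $w$-pure by Lemma~\ref{GVABS}. Consequently $\Ext_R^1(R/J,R/\m)=0$. I would then compute this group from the short exact sequence $0\to J\to R\to R/J\to 0$: applying $\Hom_R(-,R/\m)$ and using $\Ext_R^1(R,R/\m)=0$, together with the observation that the restriction map $\Hom_R(R,R/\m)\to\Hom_R(J,R/\m)$ is zero (every homomorphism $R\to R/\m$ sends $J\subseteq\m$ to $0$), one obtains $\Ext_R^1(R/J,R/\m)\cong\Hom_R(J,R/\m)$. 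Thus the whole argument reduces to producing a nonzero homomorphism $J\to R/\m$, which will contradict the vanishing just obtained.

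To produce such a homomorphism, note that $J\in GV(R)$ makes the natural map $R\to\Hom_R(J,R)$ injective, whence $\Ann_R(J)=0$; as $J$ is finitely generated this forces $\mathrm{Supp}_R(J)=\Spec R$, so $J_\m\neq 0$, and Nakayama's lemma gives $J/\m J\neq 0$. Since $J/\m J$ is then a nonzero vector space over the field $R/\m$, it carries a nonzero $R/\m$-linear functional $J/\m J\to R/\m$; composing with the projection $J\twoheadrightarrow J/\m J$ yields a nonzero element of $\Hom_R(J,R/\m)$, and the proof is complete. The one step I expect to require care is the choice of the absolutely $w$-pure test module: the naive candidate $A=R/J$ only yields $\Ext_R^1(R/J,R/J)\cong\Hom_R(J,R/J)\cong\Hom_{R/J}(J/J^2,R/J)$, which need not be nonzero, whereas passing to $R/\m$ (still $GV$-torsion because $\m\supseteq J$) is precisely what makes the Nakayama argument go through. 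It is also worth stating at the outset the standard equivalence that $R$ is $DW$ if and only if $GV(R)=\{R\}$, since this underlies both $(3)\Rightarrow(2)$ and the opening line of $(1)\Rightarrow(3)$.
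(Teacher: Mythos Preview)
Your proof is correct, and for the key implication $(1)\Rightarrow(3)$ it follows a genuinely different route from the paper. Both arguments start by assuming $R$ is not $DW$ and identify a maximal ideal $\m$ for which $R/\m$ is $GV$-torsion, hence absolutely $w$-pure by Lemma~\ref{GVABS}. From there the paper invokes the hypothesis for \emph{all} finitely generated ideals $I$ to get $\Ext_R^1(R/I,R/\m)=0$, then appeals to the Ding--Chen lemma \cite[Lemma~3.1]{DING4} (valid because $R/\m$ is simple) to convert this into $\Tor_R^1(R/I,R/\m)=0$, concluding that $R/\m$ is flat and hence that $\m$ is a $w$-ideal, a contradiction. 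You instead use the hypothesis for the single finitely presented module $R/J$, compute $\Ext_R^1(R/J,R/\m)\cong\Hom_R(J,R/\m)$ directly from the long exact sequence (using $J\subseteq\m$ to kill the restriction map), and then exhibit a nonzero element of $\Hom_R(J,R/\m)$ via Nakayama and the fact that $\Ann_R(J)=0$ for $J\in GV(R)$. Your approach is more self-contained---it avoids the external Ext/Tor transfer lemma and needs only one test module---while the paper's approach yields the stronger intermediate statement that $R/\m$ is flat, which is reused later (e.g.\ in Proposition~\ref{COHERENTDW}). Your remark that the equivalence ``$R$ is $DW$ $\Leftrightarrow$ $GV(R)=\{R\}$'' should be recorded explicitly is well taken; it is standard (it follows from $J_w=R$ for $J\in GV(R)$), and the paper implicitly uses the analogous fact via \cite[Theorem~6.3.12 and Proposition~6.2.5]{KIMBOOK}.
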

	\begin{proof}
		$(3)\Rightarrow (2)$. It is obvious and $(2)\Rightarrow (1)$ follows from the fact that  finitely presented $R$-modules are always FP-projective.\\
		$(1)\Rightarrow (3)$. Suppose that $R$ is not a $DW$-ring. Then, by \cite[Theorem 6.3.12]{KIMBOOK}, there exist maximal ideal $\m$ of $R$ which is not $w$-ideal, and so by \cite[Theorem 6.2.9]{KIMBOOK}, $\m_{w}=R$. Hence, by \cite[Proposition 6.2.5]{KIMBOOK}, $R/\m$ is a $GV$-torsion $R$-module (sine $\m$ is  a $GV$-torsion-free $R$-module), and so $R/\m$ is an absolutely $w$-pure $R$-module (by Lemma   \ref{GVABS}). Hence, by hypothesis, for any $I$ finitely generated ideal $I$ of $R$, we get $\Ext_{R}^1(R/I,R/\m)=0$. Using \cite[Lemma 3.1]{DING4}, we obtain that $\Tor_{R}^1(R/I,R/\m)=0$, which means that $R/\m$ is flat. Accordingly, $\m$ is a $w$-ideal, and then $\m_{w}=\m$,
		a contradiction with $\m_{w} = R$. Consequently, $R$ is a $DW$-ring.
	\end{proof}
	\begin{remark}\label{example1}
		Let $(R, \m)$ be a regular local ring with ${\rm gldim}(R) =n$ ($n\geq 2$).  By \cite[Example 2.6]{MRM}, $R$ is not $DW$ ring. Hence, there exists an FP-projective $R$-module $M$ which is not $w$-FP-projective.
	\end{remark}

	Next, we give some characterizations of $w$-$\FP$-projective modules.
	\begin{proposition}
		Let $M$ be an $R$-module. Then the following are equivalent:
		\begin{enumerate}
			\item $M$ is $w$-$\FP$-projective.
			\item $M$ is projective with respect to every exact sequence $\xymatrix{ 0\rightarrow A\rightarrow B\rightarrow C\rightarrow 0 }$, where $A$ is absolutely $w$-pure.
			\item $P\otimes M$ is $w$-$\FP$-projective for any  projective $R$-module $P$.
			\item $\Hom(P,M)$ is $w$-$\FP$-projective for any finitely generated projective $R$-module $P$.
		\end{enumerate}
	\end{proposition}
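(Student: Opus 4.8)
The plan is to prove the cycle of implications $(1)\Rightarrow(2)\Rightarrow(1)$ and then the equivalences $(1)\Leftrightarrow(3)$ and $(1)\Leftrightarrow(4)$, each via a short homological argument using the long exact sequence of $\Ext$.

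First, for $(1)\Leftrightarrow(2)$: the statement that $M$ is projective with respect to the exact sequences $0\to A\to B\to C\to 0$ with $A$ absolutely $w$-pure means exactly that the connecting map $\Hom_R(M,B)\to\Hom_R(M,C)$ is surjective for all such sequences, i.e. that the segment $\Hom_R(M,B)\to\Hom_R(M,C)\to\Ext^1_R(M,A)$ forces $\Ext^1_R(M,A)=0$ whenever we can realize $A$ as the kernel term. For the direction $(1)\Rightarrow(2)$ this is immediate, since $\Ext^1_R(M,A)=0$ by definition of $w$-$\FP$-projective. For $(2)\Rightarrow(1)$, given an arbitrary absolutely $w$-pure module $A$, embed $A$ into an injective module $E$ and form $0\to A\to E\to E/A\to 0$; applying $\Hom_R(M,-)$ and using that $\Hom_R(M,E)\to\Hom_R(M,E/A)$ is surjective by hypothesis, together with $\Ext^1_R(M,E)=0$, yields $\Ext^1_R(M,A)=0$.

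For $(1)\Leftrightarrow(3)$ and $(1)\Leftrightarrow(4)$ I would use the standard adjunction/tensor identities together with the fact that a finitely generated projective module $P$ is a direct summand of a finite free module $R^n$. For $(3)$: if $P$ is projective, then $P\otimes_R M$ is a direct summand of a direct sum of copies of $M$ (when $P$ is free it is literally a direct sum $M^{(I)}$), and the class of $w$-$\FP$-projective modules is closed under arbitrary direct sums and direct summands because $\Ext^1_R(-,A)$ converts direct sums into products and $\Ext^1_R$ of a summand is a summand of $\Ext^1_R$; so $\Ext^1_R(P\otimes_R M,A)=0$ for every absolutely $w$-pure $A$. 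Conversely $(3)\Rightarrow(1)$ by taking $P=R$. For $(4)$: when $P$ is finitely generated projective, $\Hom_R(P,M)\cong P^\ast\otimes_R M$ with $P^\ast=\Hom_R(P,R)$ again finitely generated projective, so this reduces to the same direct-sum/summand argument as in $(3)$; and $(4)\Rightarrow(1)$ by taking $P=R$, since $\Hom_R(R,M)\cong M$.

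The only point that needs a little care — and the place I expect the main (minor) obstacle — is the closure of the class of $w$-$\FP$-projective modules under arbitrary direct sums, which is what makes the reductions in $(3)$ and $(4)$ work for non-finitely-generated projectives: one must invoke $\Ext^1_R\bigl(\bigoplus_i M_i, A\bigr)\cong\prod_i\Ext^1_R(M_i,A)$, valid since the $M_i$ are a direct sum, to conclude that each factor vanishes iff the product does. With that identity in hand the whole proposition is a routine diagram chase; none of the steps requires the $w$-operation machinery beyond the definition of absolutely $w$-pure, so the argument is formally identical to the classical $\FP$-projective case.
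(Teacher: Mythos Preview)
Your argument is correct. The treatment of $(1)\Leftrightarrow(2)$ matches the paper (which simply calls it ``straightforward''), and taking $P=R$ for the converse directions is exactly what the paper does.

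For $(1)\Rightarrow(3)$ and $(1)\Rightarrow(4)$, however, you take a different path from the paper. The paper invokes two ready-made isomorphisms from \cite{KIMBOOK}: for any projective $P$,
\[
\Ext_R^1(P\otimes M,A)\cong \Hom_R(P,\Ext_R^1(M,A)),
\]
and for any finitely generated projective $P$,
\[
\Ext_R^1(\Hom_R(P,M),A)\cong P\otimes_R \Ext_R^1(M,A),
\]
so that vanishing of $\Ext_R^1(M,A)$ immediately forces the left-hand sides to vanish. You instead reduce to the structural fact that $P\otimes M$ (resp.\ $\Hom_R(P,M)\cong P^\ast\otimes M$) is a summand of a direct sum of copies of $M$, and then appeal to closure of the $w$-$\FP$-projective class under direct sums and summands via $\Ext_R^1\bigl(\bigoplus_i M_i,A\bigr)\cong\prod_i\Ext_R^1(M_i,A)$. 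Your route is more self-contained and avoids citing external isomorphism theorems; the paper's route is shorter once those isomorphisms are granted and in fact yields a sharper identification of $\Ext_R^1(P\otimes M,A)$ rather than merely its vanishing. Note also that the closure property you single out as the ``only point that needs a little care'' is stated separately in the paper (Proposition~2.10), so your argument effectively folds that proposition into the present proof.
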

	\begin{proof}
		$(1)\Leftrightarrow(2)$. It is straightforward.\\
		$(1)\Rightarrow (3)$. Let $A$ be any absolutely $w$-pure $R$-module and $P$ be a projective $R$-module. Following \cite[Theorem 3.3.10]{KIMBOOK},  we have the  isomorphism:
		$$\Ext_{R}^1(P\otimes M,A)\cong \Hom(P,\Ext_{R}^1(M,A)).$$
		Since $M$ is $w$-$\FP$-projective, we have $\Ext_{R}^1(M,A)=0$. Thus, $\Ext_{R}^1(P\otimes M,A)=0$, and so $P\otimes M$ is $w$-$\FP$-projective.\\
		$(1)\Rightarrow (4)$. Let $A$ be any absolutely $w$-pure $R$-module and $P$ be a finitely generated projective $R$-module. Using \cite[Theorem 3.3.12]{KIMBOOK}, we have the  isomorphism:
		$$\Ext_{R}^1(\Hom(P,M),A)\cong P\otimes \Ext_{R}^1(M,A)=0.$$
		Hence, $\Hom(P,M)$ is a $w$-$\FP$-projective $R$-module.\\
		$(3)\Rightarrow (1)$ and $(4)\Rightarrow (1)$.  Follow by letting $P=R$.
	\end{proof}
	Recall that a fractional ideal $I$ of a domain $R$ is said to be $w$-invertible if $(II^{-1})_{w}=R$. A domain $R$ is said to be a Pr\"{u}fer $v$-multiplication domain ($PvMD$) when any nonzero finitely generated ideal of $R$ is $w$-invertible. Equivalently, $R$ is a $PvMD$ if and only if $R_{\p}$ is a valuation domain for any maximal $w$-ideal $\p$ of $R$ (\cite[Theorem 2.1]{WANGMC}). The class of $PvMDs$ strictly contains the classes of Pr\"{u}fer domains, Krull domains, and integrally closed coherent domains.
	\begin{proposition}
		Let $R$ be a $PvMD$. Then $\pd_{R}(M)\leq 1$ for any $w$-$\FP$-projective $R$-module $M$.
	\end{proposition}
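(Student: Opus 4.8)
The plan is to localize at maximal ideals, exploiting that $R_{\p}$ is a valuation domain for every $\p\in w\text{-}\Max(R)$, after first reducing to the finitely presented case.

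The first ingredient is an $\Ext$-vanishing statement: over a $PvMD$, $\Ext^{2}_{R}(N,A)=0$ whenever $N$ is $w$-$\FP$-projective and $A$ is absolutely $w$-pure. I would deduce this from the fact that, over a $PvMD$, the class of absolutely $w$-pure modules is closed under cokernels of monomorphisms. Indeed, by \cite[Theorem 2.6]{SXFW} together with the compatibility of $\Ext^{1}$ with localization in the finitely presented variable (and the fact that a module is $GV$-torsion exactly when it vanishes at every maximal $w$-ideal), $A$ is absolutely $w$-pure if and only if $A_{\p}$ is $\FP$-injective over $R_{\p}$ for every $\p\in w\text{-}\Max(R)$; since $R_{\p}$ is a valuation domain, every finitely presented $R_{\p}$-module has projective dimension at most $1$ (its first syzygy is finitely generated and torsion-free, hence free), so the $\FP$-injective $R_{\p}$-modules are closed under cokernels of monomorphisms, and the claim follows by localizing. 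Now embed $A$ in an injective module $E$; then $E/A$ is absolutely $w$-pure, and the long exact sequence gives $\Ext^{2}_{R}(N,A)\cong\Ext^{1}_{R}(N,E/A)=0$.

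Next I would treat a finitely presented $w$-$\FP$-projective module $G$. Pick $0\to K\to F\to G\to 0$ with $F$ finitely generated free, so that $K$ is finitely generated; the long exact sequence and the previous paragraph give $\Ext^{1}_{R}(K,A)\cong\Ext^{2}_{R}(G,A)=0$ for every absolutely $w$-pure $A$, so $K$ is $w$-$\FP$-projective and hence finitely presented by Remark \ref{ENOCHSFP}. It then suffices to show that $K_{\m}$ is free over $R_{\m}$ for every maximal ideal $\m$. If $\m\in w\text{-}\Max(R)$, then $R_{\m}$ is a valuation domain and $K_{\m}$ is finitely generated and torsion-free, hence free. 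If $\m\notin w\text{-}\Max(R)$, then $\m$ is not a $w$-ideal, so $\m_{w}=R$, and hence (exactly as in the proof of Proposition \ref{DW}) $R/\m$ is $GV$-torsion, so absolutely $w$-pure by Lemma \ref{GVABS}; consequently $\Ext^{1}_{R_{\m}}(K_{\m},\kappa(\m))\cong\Ext^{1}_{R}(K,R/\m)=0$, and a finitely generated module over a local ring whose $\Ext^{1}$ into the residue field vanishes is free. Thus $K$ is projective and $\pd_{R}(G)\le 1$.

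Finally, an arbitrary $w$-$\FP$-projective module $M$ is $\FP$-projective, hence a direct summand of a module admitting a continuous filtration by finitely presented modules; I would use the cotorsion pair $(w\text{-}\FP\text{-projective},\ \text{absolutely }w\text{-pure})$ to arrange the successive quotients of this filtration to be finitely presented $w$-$\FP$-projective modules, which by the previous paragraph have projective dimension $\le 1$, so that $\pd_{R}(M)\le 1$ by Auslander's theorem that projective dimension $\le 1$ is inherited under transfinite extensions. The heart of the argument is the finitely presented case, and within it the analysis at the maximal ideals that are not $w$-ideals: there the valuation structure is unavailable, and what makes the argument go through is precisely that $R/\m$ is $GV$-torsion---hence absolutely $w$-pure---at such $\m$, which forces the syzygy to be locally free; this is also where the hypothesis ``$PvMD$'' (as opposed to merely ``$DW$ domain'') is genuinely used, and I would expect this step to be the main obstacle.
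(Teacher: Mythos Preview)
Your reduction to the finitely presented case (Steps 1 and 2) is essentially sound, though the sentence ``a finitely generated module over a local ring whose $\Ext^1$ into the residue field vanishes is free'' is false as stated; you need \emph{finitely presented}, which you have since you already showed $K$ is $w$-$\FP$-projective and invoked Remark~\ref{ENOCHSFP}. The isomorphism $\Ext^1_{R_\m}(K_\m,\kappa(\m))\cong\Ext^1_R(K,R/\m)$ is correct because $R/\m$ is already an $R_\m$-module, so $\Hom_R(-,R/\m)=\Hom_{R_\m}((-)_{\m},R/\m)$ and this passes to derived functors.

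The genuine gap is Step 3. There is no reason for $(w\text{-}\FP\text{-projective},\ \text{absolutely }w\text{-pure})$ to be a cotorsion pair: the right orthogonal of the $w$-$\FP$-projectives would equal the absolutely $w$-pure modules only if every finitely presented module were $w$-$\FP$-projective, which by Proposition~\ref{DW} forces $R$ to be $DW$ --- precisely the uninteresting case for a $PvMD$. The filtration you inherit from the $(\FP\text{-projective},\FP\text{-injective})$ pair has finitely presented successive quotients, but you have no mechanism to make those quotients $w$-$\FP$-projective, so your appeal to Auslander/Eklof does not go through. Contrary to your closing remark, the non-$w$-maximal step is not the obstacle; the passage from finitely presented to arbitrary $M$ is.

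The paper's proof bypasses all of this in two lines: over a $PvMD$ every $h$-divisible module is absolutely $w$-pure (\cite[Theorem~2.10]{SXFW}), so $\Ext^1_R(M,D)=0$ for every $h$-divisible $D$, and then \cite[VII, Proposition~2.5]{FUCHSSALCE} (over a domain, $\pd_R(M)\le 1$ iff $\Ext^1_R(M,D)=0$ for all $h$-divisible $D$) finishes immediately. No localization, no reduction to finitely presented modules, no filtrations. In fact your own Step~1 already contains the seed of this: once you know quotients of injectives are absolutely $w$-pure, the long exact sequence from $0\to N\to E(N)\to E(N)/N\to 0$ gives $\Ext^2_R(M,N)=0$ for \emph{every} $N$ directly, making Steps~2 and~3 unnecessary.
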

	\begin{proof}
		Let $M$ be a $w$-$\FP$-projective $R$-module. Following  \cite[Theorem 2.10]{SXFW}, every $h$-divisible $R$-module is absolutely $w$-pure. Hence,  $\Ext_{R}^1(M,D)=0$ for any $h$-divisible $R$-module $D$. Hence, by \cite[\romannumeral 7, Proposition 2.5]{FUCHSSALCE}, $\pd_{R}(M)\leq 1$, as desired.
	\end{proof}
	\begin{proposition}
		If $M$ is a $w$-$\FP$-projective $R$-module and $\Ext_{R}^1(M,G)=0$ for any $GV$-torsion-free $R$-module $G$, then $M$ is projective.
	\end{proposition}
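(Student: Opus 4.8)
The plan is to reduce everything to the standard fact that an $R$-module $M$ is projective precisely when $\Ext_R^1(M,N)=0$ for \emph{every} $R$-module $N$. So I would fix an arbitrary $R$-module $N$ and aim to show $\Ext_R^1(M,N)=0$.

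First I would split $N$ along its $GV$-torsion submodule, i.e. consider the short exact sequence
$$0\longrightarrow {\rm tor}_{GV}(N)\longrightarrow N\longrightarrow N/{\rm tor}_{GV}(N)\longrightarrow 0.$$
By construction ${\rm tor}_{GV}(N)$ is a $GV$-torsion module, so Lemma \ref{GVABS} makes it absolutely $w$-pure, and since $M$ is $w$-$\FP$-projective we get $\Ext_R^1(M,{\rm tor}_{GV}(N))=0$. On the other side, $N/{\rm tor}_{GV}(N)$ is $GV$-torsion-free (a standard property of the $GV$-torsion submodule, using that $GV(R)$ is closed under finite products; see \cite{KIMBOOK}), so the hypothesis $\Ext_R^1(M,G)=0$ for all $GV$-torsion-free $G$ gives $\Ext_R^1(M,N/{\rm tor}_{GV}(N))=0$.

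Now I would feed the displayed sequence into the long exact sequence for $\Ext_R(M,-)$, namely
$$\Ext_R^1(M,{\rm tor}_{GV}(N))\longrightarrow \Ext_R^1(M,N)\longrightarrow \Ext_R^1(M,N/{\rm tor}_{GV}(N)),$$
whose two outer terms vanish by the previous paragraph; hence $\Ext_R^1(M,N)=0$. Since $N$ was arbitrary, $\Ext_R^1(M,-)\equiv 0$, and therefore $M$ is projective.

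I do not expect a genuine obstacle here: the entire content is the observation that every module sits in an extension of a $GV$-torsion-free module by a $GV$-torsion (hence, by Lemma \ref{GVABS}, absolutely $w$-pure) one, and that the two hypotheses on $M$ are exactly what is needed to kill $\Ext^1$ against the two pieces. The only point meriting a line of justification is that $N/{\rm tor}_{GV}(N)$ is $GV$-torsion-free, and even this is routine: if $\bar x$ is annihilated modulo ${\rm tor}_{GV}(N)$ by a $GV$-ideal $J=(a_1,\dots,a_n)$, choose $GV$-ideals $J_i$ with $J_i a_i x=0$ and note that $J_1\cdots J_n J$ is again a $GV$-ideal annihilating $x$, so $\bar x=0$.
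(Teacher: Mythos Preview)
Your proof is correct and follows essentially the same approach as the paper: both split an arbitrary module along its $GV$-torsion submodule, use Lemma~\ref{GVABS} plus the $w$-$\FP$-projectivity of $M$ to kill $\Ext^1$ against the $GV$-torsion part, the hypothesis on $GV$-torsion-free modules to kill $\Ext^1$ against the quotient, and conclude via the long exact sequence. Your write-up is slightly more explicit (you justify that $N/{\rm tor}_{GV}(N)$ is $GV$-torsion-free, which the paper takes for granted), but the argument is the same.
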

	\begin{proof}
		Let $A$ be an arbitrary $R$-module. The exact sequence
		$$\xymatrix{ 0\rightarrow {\rm tor}_{GV}(A)\rightarrow A\rightarrow A/{\rm tor}_{GV}(A)\rightarrow 0}$$
		gives rise to the exact sequence
		$$\xymatrix{ 0=\Ext_{R}^1(M,{\rm tor}_{GV}(A))\rightarrow \Ext_{R}^1(M,A)\rightarrow \Ext_{R}^1(M, A/{\rm tor}_{GV}(A))=0}$$
		Thus $\Ext_{R}^1(M,A)=0$, and so $M$ is projective.
	\end{proof}
	
	\begin{proposition}
		Let $(R,\m)$ be a local ring which not $DW$-ring (for example,  regular local rings $R$ with ${\rm gldim}(R) =n$ ($n\geq 2$)). Then every finitely generated $w$-$\FP$-projective $R$-module $M$ is free.
	\end{proposition}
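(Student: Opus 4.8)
The plan is to reduce freeness to the vanishing of the relation module, using as the one non-formal input the fact that $R/\m$ is an absolutely $w$-pure module. First I would recall, exactly as in the proof of Proposition \ref{DW}, that since $R$ is local and not $DW$ its (unique) maximal ideal $\m$ fails to be a $w$-ideal, so by \cite[Theorems 6.3.12 and 6.2.9]{KIMBOOK} we have $\m_{w}=R$; hence $R/\m=\m_{w}/\m$ is $GV$-torsion, and by Lemma \ref{GVABS} it is absolutely $w$-pure. Consequently, if $M$ is $w$-$\FP$-projective then $\Ext_{R}^1(M,R/\m)=0$. Moreover, by Remark \ref{ENOCHSFP}, a finitely generated $w$-$\FP$-projective module is finitely presented, so $M$ is finitely presented.

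Next I would choose a minimal free presentation $0\rightarrow K\rightarrow F\rightarrow M\rightarrow 0$: take $F=R^{m}$ with $m$ the minimal number of generators of $M$, so that the residues of the standard basis of $F$ form an $(R/\m)$-basis of $M/\m M$; a Nakayama-type argument then gives $K\subseteq\m F$. Since $M$ is finitely presented and $F$ is finitely generated, $K$ is finitely generated.

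Now apply $\Hom_{R}(-,R/\m)$ to this sequence. Every homomorphism $F\rightarrow R/\m$ kills $\m F$, hence kills $K$, so the restriction map $\Hom_{R}(F,R/\m)\rightarrow\Hom_{R}(K,R/\m)$ is zero; combining this with the exactness of
$$\Hom_{R}(F,R/\m)\rightarrow\Hom_{R}(K,R/\m)\rightarrow\Ext_{R}^1(M,R/\m)=0$$
gives $\Hom_{R}(K,R/\m)=0$. But $\Hom_{R}(K,R/\m)\cong\Hom_{R/\m}(K/\m K,R/\m)$ is the dual of the $(R/\m)$-vector space $K/\m K$, so $K/\m K=0$, and since $K$ is finitely generated, Nakayama's lemma yields $K=0$. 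Therefore $M\cong F$ is free.

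The argument is essentially a soft diagram chase once the key point is in place, and that key point — that $R/\m$ is absolutely $w$-pure — is exactly where the hypothesis ``$R$ is local and not $DW$'' is used (for a $DW$ ring this would fail, and indeed the conclusion would fail). The only places that need a little care, rather than a genuine obstacle, are the existence of the minimal presentation with $K\subseteq\m F$ and the finite generation of $K$ (needed to apply Nakayama), both of which come from $M$ being finitely presented over a local ring.
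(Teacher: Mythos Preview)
Your argument is correct and shares the same starting point as the paper: both identify that $R/\m$ is absolutely $w$-pure (via Lemma \ref{GVABS}) so that $\Ext_{R}^{1}(M,R/\m)=0$, and both invoke Remark \ref{ENOCHSFP} to see that $M$ is finitely presented. The divergence is in how freeness is extracted from this vanishing. The paper quotes \cite[Lemma 3.1]{DING4} to pass from $\Ext_{R}^{1}(M,R/\m)=0$ to $\Tor_{1}^{R}(M,R/\m)=0$, and then quotes \cite[Lemma 2.5.8]{SG} to conclude that a finitely presented module with this Tor-vanishing is projective, hence free over a local ring. You instead bypass both citations by working directly with a minimal free presentation $0\to K\to F\to M\to 0$: the inclusion $K\subseteq\m F$ makes the restriction map $\Hom_{R}(F,R/\m)\to\Hom_{R}(K,R/\m)$ zero, so the Ext-vanishing forces $\Hom_{R}(K,R/\m)=0$, whence $K/\m K=0$ and Nakayama finishes. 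Your route is more self-contained and transparent; the paper's is shorter on the page but relies on black-box lemmas that amount to the same Nakayama computation.
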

	\begin{proof}Let $M$ be a finitely generated $w$-$\FP$-projective $R$-module. As in the proof of Proposition  \ref{DW}, we obtain that $\Tor_{R}^1(M,R/\m)=0$. But $M$ is finitely generated, and  so finitely presented (by Remark   \ref{ENOCHSFP}). Hence, by \cite[Lemma 2.5.8]{SG},  $M$ is projective. Consequently, $M$ is free since $R$ is local.
	\end{proof}
	\begin{proposition}
		The class of all $w\textrm{-}\FP$-projective modules is closed under arbitrary direct sums and under direct summands.
	\end{proposition}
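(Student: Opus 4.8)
The plan is to reduce both assertions to the additivity of $\Ext^1_R(-,A)$ in its first argument. First I would fix an arbitrary absolutely $w$-pure $R$-module $A$ and recall the natural isomorphism
$$\Ext^1_R\Big(\bigoplus_{i\in I}M_i,\,A\Big)\;\cong\;\prod_{i\in I}\Ext^1_R(M_i,A),$$
which holds for any family $(M_i)_{i\in I}$ of $R$-modules over an arbitrary ring (take the direct sum of projective resolutions of the $M_i$ and use that $\Hom(-,A)$ turns direct sums into products while cohomology commutes with products). If each $M_i$ is $w$-$\FP$-projective, then every factor on the right vanishes by Definition \ref{w-FP-proj}, hence the product is $0$; since $A$ was an arbitrary absolutely $w$-pure module, $\bigoplus_{i\in I}M_i$ is $w$-$\FP$-projective. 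This gives closure under arbitrary direct sums.

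For direct summands I would argue as follows. Suppose $M$ is a direct summand of a $w$-$\FP$-projective module $P$, say $P=M\oplus N$. For any absolutely $w$-pure $R$-module $A$ the same additivity (now a finite direct sum, so $\prod=\bigoplus$) yields $\Ext^1_R(P,A)\cong\Ext^1_R(M,A)\oplus\Ext^1_R(N,A)$, and the left-hand side is $0$ by hypothesis. Therefore $\Ext^1_R(M,A)=0$ for every absolutely $w$-pure $A$, i.e. $M$ is $w$-$\FP$-projective.

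There is essentially no obstacle here: the only inputs are the additivity of $\Ext^1$ in the first variable, valid over any commutative ring, and the defining vanishing condition for $w$-$\FP$-projective modules. The one point worth making explicit is that the test class of modules (the absolutely $w$-pure ones) is the same throughout the argument, so no manipulation of $A$ beyond "quantify over the whole class" is needed.
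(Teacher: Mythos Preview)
Your proof is correct and is essentially the same as the paper's: the paper simply cites \cite[Theorem 3.3.9(2)]{KIMBOOK}, which is precisely the additivity isomorphism $\Ext^1_R(\bigoplus_i M_i,A)\cong\prod_i\Ext^1_R(M_i,A)$ that you spell out and apply. You have merely unfolded the one-line citation into the standard argument.
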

	\begin{proof}
		Follows  from \cite[Theorem 3.3.9(2)]{KIMBOOK}.
	\end{proof}
	Recall that a ring $R$ is called  coherent if every finitely generated ideal of $R$ is finitely presented.
	\begin{lemma}\label{STRONGFP}
		Let $R$ be a coherent ring and  $A$ be an $R$-module. Then $A$ is absolutely $w$-pure if and only if  $\Ext_{R}^{n+1}(N,A)$ is a $GV$-torsion $R$-module for any finitely presented module $N$ and any integer $n\geq 0$.
	\end{lemma}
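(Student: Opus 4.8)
The plan is to prove the only nontrivial implication — that absolute $w$-purity forces $\Ext_R^{n+1}(N,A)$ to be $GV$-torsion for every finitely presented $N$ and every $n\geq 0$ — by induction on $n$, via dimension shifting. The converse is immediate: specializing $n=0$ recovers exactly the statement that $\Ext_R^1(N,A)$ is $GV$-torsion for every finitely presented $N$, which is the criterion for absolute $w$-purity in \cite[Theorem 2.6]{SXFW}.

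For the forward direction, suppose $A$ is absolutely $w$-pure. First I would dispose of the base case $n=0$, which is again \cite[Theorem 2.6]{SXFW}. For the inductive step, assume $\Ext_R^{n}(N',A)$ is $GV$-torsion for every finitely presented $R$-module $N'$, and let $N$ be finitely presented. Choose a short exact sequence $0\to K\to F\to N\to 0$ with $F$ finitely generated free. Since $N$ is finitely presented, $K$ is finitely generated (Schanuel's lemma), and since $R$ is coherent a finitely generated submodule of the finitely generated free module $F$ is finitely presented, so $K$ is finitely presented — this is the one place coherence enters. Applying $\Hom_R(-,A)$ and using $\Ext_R^{n+1}(F,A)=0$ (as $F$ is projective and $n+1\geq 1$), the long exact sequence gives a surjection $\Ext_R^{n}(K,A)\twoheadrightarrow \Ext_R^{n+1}(N,A)$. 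By the induction hypothesis applied to $K$, the module $\Ext_R^{n}(K,A)$ is $GV$-torsion, and since the class of $GV$-torsion modules is closed under homomorphic images, $\Ext_R^{n+1}(N,A)$ is $GV$-torsion, completing the induction.

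This is essentially a routine dimension shift, so I do not expect a genuine obstacle; the two points that must be stated with care are (i) that over a coherent ring the syzygy $K$ remains finitely presented — without this the induction cannot be iterated, and this is where the hypothesis on $R$ is indispensable — and (ii) that $GV$-torsion is inherited by quotients, needed to transport the torsion property across the surjection onto $\Ext_R^{n+1}(N,A)$; the latter is immediate from the definition of ${\rm tor}_{GV}$ (if $Jx=0$ for some $J\in GV(R)$ then $J\bar x=0$ in any quotient), or may be cited from the torsion-theoretic formalism in \cite{KIMBOOK}.
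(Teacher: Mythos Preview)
Your proof is correct and follows essentially the same dimension-shifting strategy as the paper: both use coherence to ensure the syzygies of a finitely presented module remain finitely presented, then reduce $\Ext_R^{n+1}(N,A)$ to $\Ext_R^1$ of a syzygy. The only cosmetic differences are that the paper shifts all $n$ steps at once (via a length-$n$ free resolution) rather than by induction, and verifies $GV$-torsion by localizing at $w$-maximal primes instead of invoking closure under quotients.
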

	\begin{proof} $(\Rightarrow)$
		Suppose that $A$ is absolutely $w$-pure $R$-module and let $N$ be a finitely presented $R$-module. The case $n=0$ is obvious. Hence, assume that   $n> 0$. Consider an exact sequence
		$$\xymatrix{ 0\rightarrow N' \rightarrow F_{n-1}\rightarrow \cdots \rightarrow F_{0}\rightarrow N\rightarrow 0 }$$
		where $ F_{0}$,...,$F_{n-1}$ are finitely generated free $R$-module and  $N'$ is finitely presented. Such sequence exists  since $R$ is  coherent. Thus, $(\Ext_{R}^{n+1}(N,A))_{\p}\cong (\Ext_{R}^{1}(N',A))_{\p}=0$ for any $w$-maximal ideal $\p$ of $R$. So,  $\Ext_{R}^{n+1}(N,A)$ is a $GV$-torsion $R$-module.  \\
		$(\Leftarrow)$ Clear.
	\end{proof}
	\begin{lemma}\label{exactpure}
		Let $R$ be a coherent ring and  $\xymatrix{ 0\rightarrow A\rightarrow B\rightarrow C\rightarrow 0}$ be an exact sequence of $R$-modules, where $A$ is
		absolutely $w$-pure. Then, $B$ is absolutely $w$-pure if and only if $C$ is absolutely $w$-pure.
	\end{lemma}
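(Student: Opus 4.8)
The plan is to test absolute $w$-purity through the characterization in \cite[Theorem 2.6]{SXFW}: an $R$-module is absolutely $w$-pure exactly when $\Ext_R^1(N,-)$ of it is a $GV$-torsion module for every finitely presented $N$. So I would fix an arbitrary finitely presented $R$-module $N$, apply $\Hom_R(N,-)$ to the given short exact sequence, and read off from the long exact sequence in $\Ext$ the four-term piece
$$\Ext_R^1(N,A)\longrightarrow \Ext_R^1(N,B)\longrightarrow \Ext_R^1(N,C)\longrightarrow \Ext_R^2(N,A).$$
Two standing facts will be used repeatedly: first, the class of $GV$-torsion $R$-modules is closed under submodules, quotient modules and extensions (it is the torsion class of the hereditary torsion theory attached to $GV(R)$, cf. \cite{KIMBOOK}); second, since $R$ is coherent and $A$ is absolutely $w$-pure, Lemma \ref{STRONGFP} gives that $\Ext_R^{n+1}(N,A)$ is $GV$-torsion for every $n\geq 0$, in particular both $\Ext_R^1(N,A)$ and $\Ext_R^2(N,A)$ are $GV$-torsion.

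For the implication ``$C$ absolutely $w$-pure $\Rightarrow$ $B$ absolutely $w$-pure'', I would note that in the displayed sequence the image of $\Ext_R^1(N,B)\to\Ext_R^1(N,C)$ is a submodule of $\Ext_R^1(N,C)$, which is $GV$-torsion by hypothesis, while the kernel of that map is a quotient of $\Ext_R^1(N,A)$, which is $GV$-torsion; hence $\Ext_R^1(N,B)$ is an extension of two $GV$-torsion modules, so $GV$-torsion. As $N$ was arbitrary, $B$ is absolutely $w$-pure. For the converse, assuming $B$ absolutely $w$-pure, $\Ext_R^1(N,B)$ is $GV$-torsion, so in the piece $\Ext_R^1(N,B)\to\Ext_R^1(N,C)\to\Ext_R^2(N,A)$ the module $\Ext_R^1(N,C)$ is an extension of a quotient of $\Ext_R^1(N,B)$ by a submodule of $\Ext_R^2(N,A)$, both $GV$-torsion; hence $\Ext_R^1(N,C)$ is $GV$-torsion and $C$ is absolutely $w$-pure.

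There is no serious obstacle here; the one point deserving care is bookkeeping with the torsion-class closure properties of $GV$-torsion modules, which should be cited explicitly rather than used tacitly, and the fact that coherence is genuinely invoked only once, to control $\Ext_R^2(N,A)$ via Lemma \ref{STRONGFP} in the forward direction (the backward direction would work over an arbitrary ring).
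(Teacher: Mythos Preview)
Your argument is correct and follows essentially the same route as the paper: both proofs apply $\Hom_R(N,-)$ for an arbitrary finitely presented $N$, extract the four-term piece $\Ext_R^1(N,A)\to\Ext_R^1(N,B)\to\Ext_R^1(N,C)\to\Ext_R^2(N,A)$, and invoke Lemma~\ref{STRONGFP} (using coherence) to see that the outer terms are $GV$-torsion. The only cosmetic difference is in the final step: the paper localizes the sequence at each maximal $w$-ideal $\mathfrak{p}$, so that the outer terms vanish outright and $\Ext_R^1(N,B)_{\mathfrak{p}}\cong\Ext_R^1(N,C)_{\mathfrak{p}}$, whereas you appeal instead to the closure of the $GV$-torsion class under submodules, quotients, and extensions; both devices encode the same fact and your observation that the direction $C\Rightarrow B$ does not require coherence is a nice bonus.
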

	\begin{proof}
		Let $N$ be a finitely presented $R$-module. We have
		$$\xymatrix{ \Ext_{R}^{1}(N,A)\rightarrow \Ext_{R}^{1}(N,B)\rightarrow \Ext_{R}^{1}(N,C)\rightarrow \Ext_{R}^{2}(N,A)}.$$
		By  Lemma  \ref{STRONGFP}, for any maximal $w$-ideal $\p$, we get
		$$\xymatrix{ 0=\Ext_{R}^{1}(N,A)_{\p}\rightarrow \Ext_{R}^{1}(N,B)_{\p}\rightarrow \Ext_{R}^{1}(N,C)_{\p}\rightarrow \Ext_{R}^{2}(N,A)_{\p}}=0.$$
		Thus, $\Ext_{R}^{1}(N,B)_{\p}\cong \Ext_{R}^{1}(N,C)_{\p}$. So, $\Ext_{R}^{1}(N,B)$ is a $GV$-torsion $R$-module if and only if $\Ext_{R}^{1}(N,C)$ is a $GV$-torsion $R$-module. Thus, $B$ is absolutely $w$-pure if and only if $C$ is absolutely $w$-pure.
	\end{proof}
	\begin{proposition}\label{SFP}
		Let $R$ be a coherent ring and $M$ be an $R$-module. Then the following are equivalent:
		\begin{enumerate}
			\item $M$ is $w$-$\FP$-projective.
			\item $\Ext_{R}^{n+1}(M,A)=0$ for any absolutely $w$-pure module $A$ and any integer $n\geq 0$.
		\end{enumerate}
	\end{proposition}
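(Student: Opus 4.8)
The implication $(2)\Rightarrow(1)$ is trivial: just take $n=0$ in condition (2), which gives $\Ext^1_R(M,A)=0$ for every absolutely $w$-pure module $A$, i.e. precisely the definition of $w$-$\FP$-projectivity. So the content is in $(1)\Rightarrow(2)$, and the plan is to argue by induction on $n$ (or, equivalently, by dimension shifting). The base case $n=0$ is again just the definition. For the inductive step, I would fix an absolutely $w$-pure module $A$ and embed it into an injective module: $0\to A\to E\to L\to 0$ with $E$ injective. Injective modules are absolutely pure, hence absolutely $w$-pure, so by Lemma \ref{exactpure} (here is where coherence of $R$ is used) the cokernel $L$ is again absolutely $w$-pure.

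The long exact sequence in $\Ext_R(M,-)$ applied to $0\to A\to E\to L\to 0$ gives, for $n\ge 1$,
$$\Ext^{n}_R(M,L)\longrightarrow \Ext^{n+1}_R(M,A)\longrightarrow \Ext^{n+1}_R(M,E).$$
Since $E$ is injective, the right-hand term vanishes, so $\Ext^{n+1}_R(M,A)$ is a quotient of $\Ext^{n}_R(M,L)$. By the induction hypothesis applied to the absolutely $w$-pure module $L$, we have $\Ext^{n}_R(M,L)=0$, and therefore $\Ext^{n+1}_R(M,A)=0$. This closes the induction and proves (2).

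The one point that genuinely requires the coherence hypothesis — and which I'd flag as the crux — is the closure property used to pass from $A$ absolutely $w$-pure to $L=E/A$ absolutely $w$-pure; over a non-coherent ring Lemma \ref{STRONGFP} (and hence Lemma \ref{exactpure}) may fail, since the higher $\Ext^{n+1}_R(N,A)$ against a finitely presented $N$ need not be controlled by an $\Ext^1$ of a finitely presented syzygy. Everything else is the standard dimension-shifting manipulation with the long exact sequence and the fact that injectives kill positive $\Ext$. I would present it compactly: state the induction, produce the injective resolution one step at a time, invoke Lemma \ref{exactpure}, and read off the vanishing from the long exact sequence.
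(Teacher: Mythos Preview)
Your proof is correct and is essentially the same as the paper's: both dimension-shift through an injective resolution of $A$ and use Lemma~\ref{exactpure} (hence coherence) to keep the cosyzygies absolutely $w$-pure, the only cosmetic difference being that the paper takes $n$ steps at once to get $\Ext_R^{n+1}(M,A)\cong\Ext_R^{1}(M,A')$ while you induct one step at a time. Your remark pinpointing Lemma~\ref{exactpure} as the place where coherence is genuinely needed is also exactly right.
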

	\begin{proof}
		$(1)\Rightarrow(2)$. Let $A$ be an absolutely $w$-pure $R$-module. The case $n=0$ is obvious. So, we may assume $n>0$. Consider an exact sequence
		$$\xymatrix{ 0\rightarrow A \rightarrow E^{0}\rightarrow \cdots \rightarrow E^{n-1}\rightarrow A^{'}\rightarrow 0 }$$
		where $ E^{0}$,...,$E^{n-1}$ are injective $R$-modules. By Lemma   \ref{exactpure}, $A'$ is absolutely $w$-pure. Hence, $\Ext_{R}^{n+1}(M,A)\cong \Ext_{R}^{1}(M,A')=0$.\\
		$(2)\Rightarrow(1)$. Obvious.
	\end{proof}
	\begin{proposition}
		Let $R$ be a coherent ring and  $\xymatrix{ 0\rightarrow M''\rightarrow M'\rightarrow M\rightarrow 0}$ be an exact sequence of $R$-modules, where $M$ is
		$w$-$\FP$-projective. Then, $M'$ is $w$-$\FP$-projective if and only if $M''$ is $w$-$\FP$-projective.
	\end{proposition}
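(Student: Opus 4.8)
The plan is to use the long exact sequence in $\Ext$ associated to the short exact sequence $0\rightarrow M''\rightarrow M'\rightarrow M\rightarrow 0$, applied with second argument an arbitrary absolutely $w$-pure module $A$. Since the coherence hypothesis lets us invoke Proposition \ref{SFP}, a module is $w$-$\FP$-projective precisely when $\Ext_{R}^{n+1}(-,A)$ vanishes for \emph{all} $n\geq 0$ and all absolutely $w$-pure $A$, so I will work with the higher $\Ext$ groups throughout rather than just $\Ext^1$.

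First I would fix an absolutely $w$-pure $R$-module $A$ and write down the exact sequence
$$\Ext_{R}^{n}(M'',A)\rightarrow \Ext_{R}^{n+1}(M,A)\rightarrow \Ext_{R}^{n+1}(M',A)\rightarrow \Ext_{R}^{n+1}(M'',A)\rightarrow \Ext_{R}^{n+2}(M,A)$$
for each $n\geq 0$. Since $M$ is $w$-$\FP$-projective and $R$ is coherent, Proposition \ref{SFP} gives $\Ext_{R}^{k}(M,A)=0$ for all $k\geq 1$. Substituting this into the sequence immediately yields isomorphisms $\Ext_{R}^{n+1}(M',A)\cong \Ext_{R}^{n+1}(M'',A)$ for every $n\geq 0$. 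Hence $\Ext_{R}^{k}(M',A)=0$ for all $k\geq 1$ if and only if $\Ext_{R}^{k}(M'',A)=0$ for all $k\geq 1$. Applying Proposition \ref{SFP} once more in each direction, this says exactly that $M'$ is $w$-$\FP$-projective if and only if $M''$ is $w$-$\FP$-projective, and since $A$ was arbitrary the equivalence is established.

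There is no real obstacle here: the only subtlety is that the naive argument using only $\Ext^1$ does not close up (from $\Ext^1(M',A)\cong\Ext^1(M'',A)$ alone one cannot conclude the equivalence, because vanishing of $\Ext^1$ is not the definition of absolutely $w$-pure and the relevant long exact sequence involves $\Ext^2(M,A)$ which one needs to know is zero). This is precisely why the coherence hypothesis and Proposition \ref{SFP} are invoked — they upgrade $w$-$\FP$-projectivity to the vanishing of all higher $\Ext$ against absolutely $w$-pure modules, after which the dimension-shifting becomes purely formal. I would state the proof in that order: reduce to higher $\Ext$ via Proposition \ref{SFP}, read off the isomorphisms from the long exact sequence, and translate back.
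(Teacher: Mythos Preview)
Your proof is correct and is precisely the argument the paper has in mind when it writes ``Follows from Proposition~\ref{SFP}'': use coherence via Proposition~\ref{SFP} to get $\Ext_R^{k}(M,A)=0$ for all $k\ge 1$, then read off $\Ext_R^{k}(M',A)\cong\Ext_R^{k}(M'',A)$ from the long exact sequence. One small remark: once you have the isomorphism at level $k=1$ the equivalence already follows directly from the definition of $w$-$\FP$-projectivity (vanishing of $\Ext^1$ against all absolutely $w$-pure modules), so the detour through ``all higher $\Ext$ vanish'' in both directions is not strictly necessary---the only place Proposition~\ref{SFP} is really needed is to kill $\Ext_R^{2}(M,A)$.
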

	\begin{proof}
		Follows from  Proposition   \ref{SFP}.
	\end{proof}
	We end this section with the following characterizations of a coherent $DW$-rings.
	\begin{proposition}\label{COHERENTDW}
		Let $R$ be a ring. Then the following are equivalent:
		\begin{enumerate}
			\item $R$ is a coherent $DW$-ring.
			\item Every finitely generated submodule of a projective $R$-module is $w$-$\FP$-projective.
			\item Every finitely generated ideal of $R$ is $w$-$\FP$-projective.
		\end{enumerate}
	\end{proposition}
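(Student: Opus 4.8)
The plan is to prove the cycle $(1)\Rightarrow(2)\Rightarrow(3)\Rightarrow(1)$. For $(1)\Rightarrow(2)$: if $R$ is a coherent $DW$-ring and $M$ is a finitely generated submodule of a projective module, then $M$ sits inside a finitely generated free module, so $M$ is finitely presented by coherence, hence $\FP$-projective; and over a $DW$-ring the $\FP$-projective modules coincide with the $w$-$\FP$-projective ones (as observed right after Definition~\ref{w-FP-proj}), so $M$ is $w$-$\FP$-projective. The implication $(2)\Rightarrow(3)$ is immediate, since a finitely generated ideal is a finitely generated submodule of the free module $R$.

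For $(3)\Rightarrow(1)$, coherence is cheap: by Remark~\ref{ENOCHSFP} a finitely generated $w$-$\FP$-projective module is finitely presented, so $(3)$ forces every finitely generated ideal to be finitely presented. Assume now, toward a contradiction, that $R$ is not $DW$. As in the proof of Proposition~\ref{DW}, there is a maximal ideal $\m$ with $\m_w=R$; since $1\in\m_w$, this produces a $GV$-ideal $J$ with $J\subseteq\m$, so $J$ is a proper, finitely generated --- hence finitely presented --- ideal, and $w$-$\FP$-projective by $(3)$. I would derive the contradiction by showing that $J$ is at once non-projective and projective. It is \emph{not} projective because $R\cong J^{\ast}$ gives $\Ann_R(J)=0$, so $J$ is essential in $R$ and $E(J)=E(R)$; as $J$ is an ideal, $Jr\subseteq J$ for every $r\in R$, hence $R\subseteq J_w$, and since $J\neq R$ this forces $J_w\neq J$, i.e.\ $J$ is not a $w$-module --- whereas projective modules are $w$-modules.

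On the other hand, $(3)$ makes $J$ projective. It suffices to show $\Ext_R^1(J,R/\m')=0$ for every maximal ideal $\m'$: then $J_{\m'}$ is free over $R_{\m'}$ (this is the crux, treated below), so $J$, being finitely presented and locally free, is projective. If $J\not\subseteq\m'$ the vanishing is automatic after localizing at $\m'$. If $J\subseteq\m'$, then $R/\m'$ is annihilated by the $GV$-ideal $J$, hence $GV$-torsion, hence absolutely $w$-pure by Lemma~\ref{GVABS}, and $w$-$\FP$-projectivity of $J$ yields $\Ext_R^1(J,R/\m')=0$. Combining the two paragraphs gives the contradiction, so $R$ is $DW$.

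The hard part is the local computation that $\Ext_R^1(J,R/\m')=0$ is equivalent to $J_{\m'}$ being free over $R_{\m'}$. Since $J$ is finitely presented, $\Ext_R^1(J,R/\m')\cong\Ext_{R_{\m'}}^1(J_{\m'},R_{\m'}/\m'R_{\m'})$; choosing a minimal free presentation $0\to N\to R_{\m'}^{t}\to J_{\m'}\to0$, so that $N\subseteq\m'R_{\m'}^{t}$, the restriction map $\Hom(R_{\m'}^{t},R_{\m'}/\m'R_{\m'})\to\Hom(N,R_{\m'}/\m'R_{\m'})$ vanishes, whence the $\Ext$ group is isomorphic to $(N/\m'N)^{\ast}$, which is zero exactly when $N=0$ by Nakayama, i.e.\ exactly when $J_{\m'}$ is free. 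The remaining ingredients --- essentiality of a $GV$-ideal, that projective modules are $w$-modules, that $\FP$-projective coincides with $w$-$\FP$-projective over a $DW$-ring, and that a finitely presented locally free module is projective --- are standard or already recorded in the paper.
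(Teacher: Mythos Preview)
Your argument is correct, and for $(1)\Rightarrow(2)$ and the coherence half of $(3)\Rightarrow(1)$ it coincides with the paper's (the paper simply cites \cite[Theorem 3.7]{MORSH} where you unwind the easy argument that a finitely generated submodule of a projective over a coherent ring is finitely presented, hence $\FP$-projective).

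For the $DW$ half of $(3)\Rightarrow(1)$, however, your route is genuinely different from the paper's. The paper observes, as you do, that $R/\m$ is absolutely $w$-pure, but then uses the hypothesis for \emph{all} finitely generated ideals $I$: from $\Ext_R^1(I,R/\m)=0$ and the long exact sequence one gets $\Ext_R^2(R/I,R/\m)=0$, and then invokes \cite[Lemma~3.1]{DING4} to convert this into $\Tor_R^2(R/I,R/\m)=0$, whence $\fd_R(R/\m)\le 1$, $\m$ is flat, and therefore a $w$-ideal---contradiction. You instead fix a single $GV$-ideal $J\subseteq\m$ and play projectivity against the $w$-module property: on one hand $J$ is not a $w$-module (so not projective), on the other hand $\Ext_R^1(J,R/\m')=0$ for every maximal $\m'$ forces $J_{\m'}$ free, hence $J$ projective. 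Your approach is longer but entirely self-contained: it avoids both external citations \cite{MORSH} and \cite{DING4} and the passage through $\Tor$, at the cost of the local Nakayama computation. The paper's approach is slicker and exhibits the stronger intermediate fact that $\m$ is flat, but leans on outside lemmas.
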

	\begin{proof}
		$(1)\Rightarrow(2)$. Follows immediately from \cite[Theorem 3.7]{MORSH} since, over a $DW$-ring, the classes of $w$-$\FP$-projective modules and $\FP$-projective modules coincide.\\
		$(2)\Rightarrow(3)$. Obvious.\\
		$(3)\Rightarrow (1)$. $R$ is coherent by Remark  \ref{ENOCHSFP}.  Assume that $R$ is not a $DW$-ring. As in the proof of Proposition  \ref{DW}, there exist a maximal ideal $\m$ of $R$ such that $R/\m$ is   absolutely $w$-pure and $\m_{w}=R$. So, for any finitely generated ideal $I$ of $R$, we have
		$$\xymatrix{ 0=\Ext_{R}^1(I,R/\m)\rightarrow \Ext_{R}^2(R/I,R/\m) \rightarrow \Ext_{R}^2(R,R/\m)=0},$$
		and then $\Ext_{R}^2(R/I,R/\m)=0$.
		By  \cite[Lemma 3.1]{DING4}, $\Tor_{R}^2(R/I,R/\m)=0$, which means that $\fd_{R}(R/\m)\leq 1$.
		Then, $\m$ is flat, and so a $w$-ideal, a contradiction.
	\end{proof}
	\begin{corollary}\label{coro3}
		Let $R$ be a domain. Then $R$ is a coherent $DW$-domain if and only if every finitely generated torsion-free $R$-module is $w$-$\FP$-projective.
	\end{corollary}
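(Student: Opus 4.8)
The plan is to deduce this immediately from Proposition \ref{COHERENTDW}, using only the classical fact that over a domain every finitely generated torsion-free module embeds in a finitely generated free module.

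First, for the forward implication, assume $R$ is a coherent $DW$-domain and let $M$ be a finitely generated torsion-free $R$-module. Put $K=\qf(R)$. Then $M\otimes_R K$ is a finite-dimensional $K$-vector space, and since $M$ is torsion-free the canonical map $M\to M\otimes_R K$ is injective, so $M$ embeds in $K^n$ for some $n$. Clearing the denominators of a finite generating set, there is $0\neq d\in R$ with $dM\subseteq R^n$, and $dM\cong M$ since $R$ is a domain. Hence $M$ is isomorphic to a finitely generated submodule of the free module $R^n$, and Proposition \ref{COHERENTDW} $(1)\Rightarrow(2)$ gives that $M$ is $w$-$\FP$-projective.

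Conversely, assume every finitely generated torsion-free $R$-module is $w$-$\FP$-projective. Every finitely generated ideal $I$ of $R$ is a submodule of $R$, hence torsion-free, hence $w$-$\FP$-projective by hypothesis. Thus condition (3) of Proposition \ref{COHERENTDW} holds, so $R$ is a coherent $DW$-ring; being a domain, it is a coherent $DW$-domain.

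I do not expect any genuine obstacle here: the proof is a routine combination of Proposition \ref{COHERENTDW} with the embedding lemma for finitely generated torsion-free modules over a domain, the only point needing a word of justification being that embedding (which itself reduces to scaling a finitely generated submodule of $K^n$ into $R^n$). One could also note, for symmetry, that a finitely generated submodule of a projective $R$-module over a domain is automatically torsion-free, so over a domain the class of finitely generated submodules of projective modules appearing in Proposition \ref{COHERENTDW}(2) coincides with the class of finitely generated torsion-free modules — which is essentially the content of the corollary.
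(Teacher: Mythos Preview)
Your proof is correct and follows essentially the same route as the paper: both directions are reduced to Proposition~\ref{COHERENTDW} via the fact that over a domain every finitely generated torsion-free module embeds in a finitely generated free module (the paper cites \cite[Theorem~1.6.15]{KIMBOOK} for this, while you spell out the standard clearing-denominators argument), and conversely every ideal of a domain is torsion-free.
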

	\begin{proof} Following \cite[Theorem 1.6.15]{KIMBOOK}, every  finitely generated
		torsion-free $R$-module can be embedded in a finitely generated free module (since $R$ is a domain). Hence, 	$(\Rightarrow)$ follows immediately from  Proposition  \ref{COHERENTDW}. For 	$(\Leftarrow)$, it suffices to see that since $R$ is a domain, every ideal is torsion-free, and then use Proposition  \ref{COHERENTDW}.
	\end{proof}
	\section{\bf The w-FP-projective dimension of modules and rings}
	In this section, we introduce and investigate the $w$-FP-projective dimension of modules and rings.
	\begin{definition}\label{defdim}
		Let $R$ be a ring. For any $R$-module $M$, the $w$-$\FP$-projective dimension of $M$, denoted by $w$-$\fpd_R(M)$, is the smallest integer $n\geq0$ such that $\Ext_R^{n+1}(M,A)=0$  for any absolutely $w$-pure $R$-module $A$. If no such integer exists, set $w$-$\fpd_R(M)=\infty$.\\
		The $w$-FP-projective dimension of $R$ is defined by:
		$$w\textrm{-}\fpD(R)=\sup\{w\textrm{-}\fpd_R(M) :\textrm{ M is finitely generated } R\textrm{-module}\}$$
	\end{definition}
	Clearly, an $R$-module $M$ is  $w$-$\FP$-projective if and only if $w\textrm{-}\fpd_R(M)=0$, and $\fpd_R(M)\leq w\textrm{-}\fpd_R(M)$, with equality when
	$R$ is a $DW$-ring. However, this inequality may be strict (Remark  \ref{example1}). Also, $\fpD(R)\leq w\textrm{-}\fpD(R)$ with equality when
	$R$ is a $DW$-ring, and  this inequality may be strict.   To see that, consider 	a regular local ring $(R, \m)$ with ${\rm gldim}(R) =n$ ($n\geq 2$). Since $R$ is Noetherian, we get  $\fpD(R)=0$ (by \cite[Proposition 2.6]{MAODING}). Moreover, by  Remark  \ref{example1}, there exists an (FP-projective) $R$-module $M$ which is not $w$-FP-projective. Thus, $w\textrm{-}\fpD(R)>0$.   \\
	First, we give a description of the $w$-FP-Projective dimension of modules over coherent ring.
	
	\begin{proposition}\label{propo13}
		Let $R$ be a coherent ring. The following statements are equivalent for an $R$-module $M$.
		\begin{enumerate}
			\item $w\textrm{-}\fpd(M)\leqslant n$.
			\item $\Ext_R^{n+1}(M,A)=0$ for any absolutely $w$-pure $R$-module $A$.
			\item $\Ext_R^{n+j}(M,A)=0$ for any absolutely $w$-pure $R$-module $A$ and any $j\geq 1$.
			\item If the sequence $0\rightarrow P_n\rightarrow P_{n-1}\rightarrow\cdots\rightarrow P_0\rightarrow M\rightarrow 0$ is exact with $P_0,\cdots,P_{n-1}$ are $w\textrm{-}\FP$-projective $R$-modules, then $P_n$ is $w\textrm{-}\FP$-projective.
			\item If the sequence $0\rightarrow P_n\rightarrow P_{n-1}\rightarrow\cdots\rightarrow P_0\rightarrow M\rightarrow 0$ is exact with $P_0,\cdots,P_{n-1}$ are projective $R$-modules, then $P_n$ is $w\textrm{-}\FP$-projective.
			\item There exists an exact sequence $0\rightarrow P_n\rightarrow P_{n-1}\rightarrow\cdots\rightarrow P_0\rightarrow M\rightarrow 0$ where each $P_{i}$
			is $w\textrm{-}\FP$-projective.
		\end{enumerate}
	\end{proposition}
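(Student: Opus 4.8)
The plan is to establish $(1)\Leftrightarrow(2)\Leftrightarrow(3)$ by ``dimension shifting up'' through an injective coresolution, and then to close the loop $(2)\Rightarrow(4)\Rightarrow(5)\Rightarrow(6)\Rightarrow(2)$ by shifting through (partial) resolutions. The coherence hypothesis enters only through two already-proved facts: Proposition \ref{SFP}, which says that a $w$-$\FP$-projective module $P$ has $\Ext_R^{i}(P,A)=0$ for \emph{every} $i\geq 1$ and every absolutely $w$-pure $A$; and Lemma \ref{exactpure}, which says that in a short exact sequence with absolutely $w$-pure left-hand term, the middle term is absolutely $w$-pure exactly when the right-hand term is.

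For $(1)\Leftrightarrow(2)\Leftrightarrow(3)$, I would first record the elementary fact that the property $P(k)$: ``$\Ext_R^{k+1}(M,A)=0$ for every absolutely $w$-pure $A$'' implies $P(k+1)$. Indeed, given an absolutely $w$-pure $A$, pick an exact sequence $0\to A\to E\to C\to 0$ with $E$ injective; then $E$ is absolutely $w$-pure, so Lemma \ref{exactpure} makes $C$ absolutely $w$-pure, while the injectivity of $E$ gives $\Ext_R^{k+2}(M,A)\cong\Ext_R^{k+1}(M,C)=0$ by $P(k)$. Iterating, $P(k)\Rightarrow P(k')$ for all $k'\geq k$; hence $w\textrm{-}\fpd(M)\leq n$ holds iff $P(n)$ holds, which is $(1)\Leftrightarrow(2)$, and $(2)\Leftrightarrow(3)$ is then immediate (take $j=1$ for $(3)\Rightarrow(2)$, and use $P(n)\Rightarrow P(n+j-1)$ for $(2)\Rightarrow(3)$).

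For the loop, I would break any exact sequence $0\to P_n\to P_{n-1}\to\cdots\to P_0\to M\to 0$ into short exact sequences $0\to K_{i+1}\to P_i\to K_i\to 0$ with $K_0=M$ and $K_n=P_n$. \emph{$(2)\Rightarrow(4)$:} if each $P_i$ is $w$-$\FP$-projective then, by Proposition \ref{SFP}, $\Ext_R^{t}(P_i,A)=0$ for all $t\geq 1$ and all absolutely $w$-pure $A$, so the long exact sequences give $\Ext_R^{m+1}(P_n,A)=\Ext_R^{m+1}(K_n,A)\cong\Ext_R^{m+1+n}(M,A)$ for every $m\geq 0$, and the right side vanishes by $(3)$; thus $P_n$ is $w$-$\FP$-projective by Proposition \ref{SFP}. \emph{$(4)\Rightarrow(5)$} is trivial since projective modules are $w$-$\FP$-projective. \emph{$(5)\Rightarrow(6)$:} for $n\geq 1$, truncate a projective resolution of $M$ at stage $n$ to get $0\to K_n\to P_{n-1}\to\cdots\to P_0\to M\to 0$ with the $P_i$ projective; by $(5)$, $K_n$ is $w$-$\FP$-projective, and this is the sequence required in $(6)$ (the case $n=0$ is immediate). \emph{$(6)\Rightarrow(2)$:} the same shifting through the $0\to K_{i+1}\to P_i\to K_i\to 0$, again using Proposition \ref{SFP} to kill the higher $\Ext$ of the $P_i$, gives $\Ext_R^{n+1}(M,A)\cong\Ext_R^{1}(K_n,A)=\Ext_R^{1}(P_n,A)=0$ for every absolutely $w$-pure $A$.

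I do not anticipate a real obstacle: the two nontrivial ingredients are already available, so what remains is careful index bookkeeping in the dimension-shift isomorphisms, and noting that when $n=0$ every one of $(1)$--$(6)$ collapses to ``$M$ is $w$-$\FP$-projective'', which matches Definition \ref{defdim} and Proposition \ref{SFP} directly. If anything needs slight extra care it is the first and last steps of each shift (degrees $1$ and $2$), where one must check that the relevant $\Ext$ of a $w$-$\FP$-projective module still vanishes — which it does, precisely by Proposition \ref{SFP}.
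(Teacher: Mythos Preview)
Your proposal is correct and follows essentially the same route as the paper: both arguments hinge on Proposition~\ref{SFP} (vanishing of all higher $\Ext$ for $w$-$\FP$-projectives over a coherent ring) to perform the dimension shifts through the short exact sequences $0\to K_{i+1}\to P_i\to K_i\to 0$. The only cosmetic difference is the logical layout: the paper runs a single cycle $(1)\Rightarrow(4)\Rightarrow(5)\Rightarrow(6)\Rightarrow(3)\Rightarrow(2)\Rightarrow(1)$, whereas you first isolate $(1)\Leftrightarrow(2)\Leftrightarrow(3)$ via an injective coresolution and Lemma~\ref{exactpure} and then close the loop $(2)\Rightarrow(4)\Rightarrow(5)\Rightarrow(6)\Rightarrow(2)$; in your $(2)\Rightarrow(4)$ step the case $m=0$ already suffices, so invoking~$(3)$ and Proposition~\ref{SFP} there is harmless overkill.
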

	\begin{proof}
		$(3)\Rightarrow (2)\Rightarrow (1)$ and $(4)\Rightarrow (5)\Rightarrow (6)$. Trivial.\\
		$(1)\Rightarrow (4)$. Let $0\rightarrow P_n\rightarrow P_{n-1}\rightarrow\cdots\rightarrow P_0\rightarrow M\rightarrow 0$ be an exact sequence of $R$-modules with $P_{0}, \cdots, P_{n-1}$ are $w\textrm{-}\FP$-projective, and set $K_{0}=\Ker(P_{0}\rightarrow M)$ and $K_{i}=\Ker(P_{i}\rightarrow P_{i-1})$, where $i=1,\cdots,n-1$. Using Proposition \ref{SFP}, we get 	
		$$0=\Ext_R^{n+1}(M,A)\cong \Ext_R^{n}(K_0,A) \cong \cdots\cong\Ext_R^{1}(P_{n},A)$$
		for all absolutely $w$-pure $R$-module $A$. Thus, $P_{n}$ is $w\textrm{-}\FP$-projective.\\
		$(6)\Rightarrow(3)$. We procced by induction on $n\geq0$. For the $n=0$, $M$ is $w\textrm{-}\FP$-projective module and so $(3)$ holds by proposition \ref{SFP}. If $n\geq1$,  then there is an eact sequence $0\rightarrow P_n\rightarrow P_{n-1}\rightarrow\cdots\rightarrow P_0\rightarrow M\rightarrow 0$ where each $P_{i}$ is $w\textrm{-}\FP$-projective. Set $K_{0}=\Ker(P_{0}\rightarrow M)$. Then, we have the following exact sequences
		$$0\rightarrow P_n\rightarrow P_{n-1}\rightarrow\cdots\rightarrow P_{1}\rightarrow K_{0}\rightarrow 0$$
		and
		$$0\rightarrow K_{0}\rightarrow P_{0}\rightarrow M\rightarrow 0$$
		Hence, by induction $\Ext_R^{n-1+j}(K_{0},A)=0$ for all absolutely $w$-pure $R$-module $A$ and all $j\geq1$. Thus, $\Ext_R^{n+j}(M,A)=0$, and so we have the desired result.
	\end{proof}
	The proof of the next proposition is standard homological algebra. Thus we omit its proof.
	\begin{proposition}
		Let $R$ be a coherent ring and  $0\rightarrow M''\rightarrow M'\rightarrow M\rightarrow 0$ be an exact sequence of $R$-modules.
		If two of $w\textrm{-}\fpd_R(M'')$, $w\textrm{-}\fpd_R(M')$ and $w\textrm{-}\fpd_R(M)$ are finite, so is the third. Moreover
		\begin{enumerate}
			\item $w\textrm{-}\fpd_R(M'')\leq \sup\left\{w\textrm{-}\fpd_R(M'),\;w\textrm{-}\fpd_R(M)-1\right\}$.
			\item $w\textrm{-}\fpd_R(M')\leq \sup\{w\textrm{-}\fpd_R(M''),\;w\textrm{-}\fpd_R(M)\}$.
			\item $w\textrm{-}\fpd_R(M)\leq \sup\{w\textrm{-}\fpd_R(M'),\;w\textrm{-}\fpd_R(M'')+1\}$.
		\end{enumerate}
	\end{proposition}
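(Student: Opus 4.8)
The plan is to run the standard dimension-shifting argument on the long exact $\Ext$-sequence attached to $0\rightarrow M''\rightarrow M'\rightarrow M\rightarrow 0$, using Proposition \ref{propo13} to translate between $w\textrm{-}\fpd$ and vanishing of $\Ext$. Concretely, since $R$ is coherent, Proposition \ref{propo13} gives, for every $R$-module $L$, the equivalence: $w\textrm{-}\fpd_R(L)\leq n$ if and only if $\Ext_R^{n+j}(L,A)=0$ for every absolutely $w$-pure $R$-module $A$ and every $j\geq 1$. So I would fix an arbitrary absolutely $w$-pure module $A$ and work with a three-term slice of
$$\cdots\rightarrow\Ext_R^{i}(M,A)\rightarrow\Ext_R^{i}(M',A)\rightarrow\Ext_R^{i}(M'',A)\rightarrow\Ext_R^{i+1}(M,A)\rightarrow\cdots$$
For each of the three inequalities, let $n$ denote the supremum appearing on the right-hand side; we may assume $n<\infty$, otherwise there is nothing to prove.

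For (2) take $n=\sup\{w\textrm{-}\fpd_R(M''),\,w\textrm{-}\fpd_R(M)\}$; then $\Ext_R^{n+1}(M,A)=0$ and $\Ext_R^{n+1}(M'',A)=0$, so the slice $\Ext_R^{n+1}(M,A)\rightarrow\Ext_R^{n+1}(M',A)\rightarrow\Ext_R^{n+1}(M'',A)$ forces $\Ext_R^{n+1}(M',A)=0$, giving $w\textrm{-}\fpd_R(M')\leq n$. For (1) take $n=\sup\{w\textrm{-}\fpd_R(M'),\,w\textrm{-}\fpd_R(M)-1\}$; then $\Ext_R^{n+1}(M',A)=0$ and, since $w\textrm{-}\fpd_R(M)\leq n+1$, also $\Ext_R^{n+2}(M,A)=0$, so $\Ext_R^{n+1}(M',A)\rightarrow\Ext_R^{n+1}(M'',A)\rightarrow\Ext_R^{n+2}(M,A)$ gives $\Ext_R^{n+1}(M'',A)=0$, i.e. $w\textrm{-}\fpd_R(M'')\leq n$. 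For (3) take $n=\sup\{w\textrm{-}\fpd_R(M'),\,w\textrm{-}\fpd_R(M'')+1\}$; then $\Ext_R^{n+1}(M',A)=0$ and, since $w\textrm{-}\fpd_R(M'')\leq n-1$, also $\Ext_R^{n}(M'',A)=0$, so $\Ext_R^{n}(M'',A)\rightarrow\Ext_R^{n+1}(M,A)\rightarrow\Ext_R^{n+1}(M',A)$ gives $\Ext_R^{n+1}(M,A)=0$, i.e. $w\textrm{-}\fpd_R(M)\leq n$. In every case the final conclusion $w\textrm{-}\fpd\leq n$ is read off Proposition \ref{propo13} (equivalently Definition \ref{defdim}), since $A$ was arbitrary. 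The ``two finite implies the third finite'' clause is then immediate: (1) bounds $w\textrm{-}\fpd_R(M'')$ in terms of the other two, (2) bounds $w\textrm{-}\fpd_R(M')$, and (3) bounds $w\textrm{-}\fpd_R(M)$, so finiteness of any two of the three forces finiteness of the remaining one.

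I do not expect a genuine obstacle: as the surrounding text indicates, this is routine homological algebra once Proposition \ref{propo13} is available. The only points needing a little care are (i) the edge case in (1) where $w\textrm{-}\fpd_R(M)=0$, so that $w\textrm{-}\fpd_R(M)-1$ is to be interpreted as $-1$ and $n=w\textrm{-}\fpd_R(M')\geq 0$ — the argument is unaffected because we only ever invoke $\Ext_R^{n+2}(M,A)=0$, which holds as $n+2\geq w\textrm{-}\fpd_R(M)+1$; and (ii) making sure coherence is genuinely used, which it is, precisely in the step passing from $w\textrm{-}\fpd_R(L)\leq m$ to vanishing of $\Ext_R^{m'}(L,A)$ for all $m'>m$ via Proposition \ref{propo13}, this being the one place where the argument would break down over an arbitrary ring.
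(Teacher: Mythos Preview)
Your proposal is correct and is exactly the standard argument the paper has in mind; indeed the paper omits the proof entirely, stating only that it ``is standard homological algebra.'' Your use of the long exact $\Ext$-sequence together with Proposition~\ref{propo13} (which, over a coherent ring, upgrades $\Ext_R^{n+1}(-,A)=0$ to $\Ext_R^{n+j}(-,A)=0$ for all $j\geq1$) is precisely the intended route, and your handling of the edge cases is accurate.
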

	\begin{corollary}\label{coro7}
		Let $R$ be a coherent ring and $0\rightarrow M''\rightarrow M'\rightarrow M\rightarrow 0$ be an exact sequence of $R$-modules. If $M'$ is $w$-$\FP$-projective and $w$-$\fpd_R(M)>0$, then $w\textrm{-}\fpd_R(M)=w\textrm{-}\fpd_R(M'')+1$.
	\end{corollary}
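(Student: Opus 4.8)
The plan is to read off the equality directly from the two-out-of-three inequalities of the preceding proposition, specialized to the case in which $M'$ is $w\textrm{-}\FP$-projective, i.e.\ $w\textrm{-}\fpd_R(M')=0$; no new homological input is needed, only bookkeeping with the suprema.

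First I would record the upper bound. Since $w\textrm{-}\fpd_R(M')=0$, part (3) of the preceding proposition gives
\[
w\textrm{-}\fpd_R(M)\le \sup\{w\textrm{-}\fpd_R(M'),\; w\textrm{-}\fpd_R(M'')+1\}=\sup\{0,\; w\textrm{-}\fpd_R(M'')+1\}=w\textrm{-}\fpd_R(M'')+1 .
\]
Next I would obtain the matching lower bound from part (1): $w\textrm{-}\fpd_R(M'')\le \sup\{w\textrm{-}\fpd_R(M'),\; w\textrm{-}\fpd_R(M)-1\}=\sup\{0,\; w\textrm{-}\fpd_R(M)-1\}$. This is exactly where the hypothesis $w\textrm{-}\fpd_R(M)>0$ enters: it forces $w\textrm{-}\fpd_R(M)-1\ge 0$, so the supremum equals $w\textrm{-}\fpd_R(M)-1$, whence $w\textrm{-}\fpd_R(M'')+1\le w\textrm{-}\fpd_R(M)$. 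Combining the two bounds gives the equality $w\textrm{-}\fpd_R(M)=w\textrm{-}\fpd_R(M'')+1$ whenever $w\textrm{-}\fpd_R(M)$ is finite (note that the lower bound already makes $w\textrm{-}\fpd_R(M'')$ finite in that case).

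To make the statement unconditional I would finally dispose of the infinite case: if $w\textrm{-}\fpd_R(M)=\infty$, then since $w\textrm{-}\fpd_R(M')=0$ is finite, the ``two of the three finite implies the third finite'' clause of the preceding proposition forces $w\textrm{-}\fpd_R(M'')=\infty$ too, and the equality holds trivially. I do not anticipate a real obstacle; the one point worth flagging --- and the reason the assumption $w\textrm{-}\fpd_R(M)>0$ cannot be dropped --- is that for $w\textrm{-}\fpd_R(M)=0$ the term $w\textrm{-}\fpd_R(M)-1$ in part (1) is absorbed by $0$ in the supremum, so one would only recover $w\textrm{-}\fpd_R(M'')\le 0$ rather than the (false) relation $w\textrm{-}\fpd_R(M'')+1=0$.

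If a self-contained argument is preferred to quoting the preceding proposition, the same conclusion follows by a dimension shift along the long exact $\Ext$-sequence of $0\rightarrow M''\rightarrow M'\rightarrow M\rightarrow 0$: writing $n=w\textrm{-}\fpd_R(M)\ge 1$ and using Proposition \ref{SFP} (so that $\Ext_R^{j}(M',A)=0$ for all $j\ge 1$ and every absolutely $w$-pure $A$), one obtains $\Ext_R^{j}(M'',A)\cong \Ext_R^{j+1}(M,A)$ for all $j\ge 1$, and then reads off $w\textrm{-}\fpd_R(M'')=n-1$ from $w\textrm{-}\fpd_R(M)=n$ via Proposition \ref{propo13}.
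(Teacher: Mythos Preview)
Your proposal is correct and is precisely the intended argument: the paper states this as a corollary with no proof, since it follows immediately from the three inequalities of the preceding proposition (whose proof is itself omitted as standard). Your alternative self-contained dimension-shift argument via Proposition~\ref{SFP} is also valid and equivalent.
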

	
	\begin{proposition}\label{propo5}
		Let $R$ be a coherent ring and $\{M_{i}\}$ be a family of $R$-modules. Then $w\textrm{-}\fpd_R(\oplus_{i}M_{i})=\sup_{i}\{w\textrm{-}\fpd_R(M_{i})\}$.
	\end{proposition}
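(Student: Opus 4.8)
The strategy is to reduce the equality of (possibly infinite) dimensions to a single uniform statement: for every integer $n\geq 0$,
\[
w\textrm{-}\fpd_R\Big(\bigoplus_i M_i\Big)\leq n\quad\Longleftrightarrow\quad w\textrm{-}\fpd_R(M_i)\leq n\ \text{ for all }i .
\]
Once this equivalence is in hand, the formula $w\textrm{-}\fpd_R(\bigoplus_i M_i)=\sup_i\{w\textrm{-}\fpd_R(M_i)\}$ follows at once by taking $n$ to be the supremum (when it is finite) and, in the infinite case, by observing that the left side then fails to be $\leq n$ for every $n$.

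To prove the displayed equivalence, I would invoke Proposition \ref{propo13} (this is where coherence of $R$ is used): for a coherent ring, $w\textrm{-}\fpd_R(N)\leq n$ is equivalent to $\Ext_R^{n+1}(N,A)=0$ for every absolutely $w$-pure $R$-module $A$. Thus it suffices to show that, for a fixed absolutely $w$-pure $A$,
\[
\Ext_R^{n+1}\Big(\bigoplus_i M_i,\,A\Big)=0\quad\Longleftrightarrow\quad \Ext_R^{n+1}(M_i,A)=0\ \text{ for all }i .
\]
This is immediate from the standard natural isomorphism of homological algebra $\Ext_R^{n+1}\big(\bigoplus_i M_i,A\big)\cong \prod_i \Ext_R^{n+1}(M_i,A)$ (which one gets, for instance, by choosing a projective resolution of $\bigoplus_i M_i$ as the direct sum of projective resolutions of the $M_i$, and using that $\Hom_R\big(\bigoplus_i (-),A\big)\cong \prod_i \Hom_R((-),A)$). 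A product of abelian groups vanishes exactly when each factor does, which yields the equivalence.

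There is essentially no serious obstacle here; the only point requiring a little care is making sure the appeal to Proposition \ref{propo13} is legitimate, i.e. that $R$ is coherent, which is part of the hypothesis. As an alternative to the $\Ext$ computation, one could argue directly with resolutions: by Proposition \ref{propo13}(6) each $M_i$ with $w\textrm{-}\fpd_R(M_i)\leq n$ admits an exact sequence $0\to P_n^{(i)}\to\cdots\to P_0^{(i)}\to M_i\to 0$ with all $P_j^{(i)}$ being $w$-$\FP$-projective (after padding shorter resolutions with zeros so that all have length $n$), and then the direct sum of these sequences is such a resolution for $\bigoplus_i M_i$, since the class of $w$-$\FP$-projective modules is closed under arbitrary direct sums; conversely each $M_i$ is a direct summand of $\bigoplus_i M_i$, and that class is also closed under direct summands, giving $w\textrm{-}\fpd_R(M_i)\leq w\textrm{-}\fpd_R(\bigoplus_i M_i)$. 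Either route completes the proof.
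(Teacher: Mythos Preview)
Your proposal is correct and is precisely the standard argument the paper has in mind; the paper's own proof consists solely of the sentence ``The proof is straightforward.'' Your use of Proposition~\ref{propo13} (and hence of the coherence hypothesis) together with the isomorphism $\Ext_R^{n+1}\big(\bigoplus_i M_i,A\big)\cong \prod_i \Ext_R^{n+1}(M_i,A)$ is exactly the intended route.
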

	\begin{proof}
		The proof is straightforward.
	\end{proof}
	
	\begin{proposition}\label{DIMENSION}
		Let $R$ be a ring and $n\geq 0$ be a an integer. Then the following statements are equivalent:
		\begin{enumerate}
			\item $w\textrm{-}\fpD(R)\leq n$.
			\item $w\textrm{-}\fpd(M)\leqslant n$ for all $R$-modules $M$.
			\item $w\textrm{-}\fpd(R/I)\leqslant n$ for all ideals $I$ of $R$.
			\item $\id_{R}(A)\leqslant n$ for all absolutely $w$-pure $R$-modules $A$.
		\end{enumerate}
		Consequently, we have
		\begin{align*}
		w\textrm{-}\fpD(R)&=\sup \{w\textrm{-}\fpd_R(M) \mid M~\textrm{is an}~R\textrm{-module}\}\\
		&=\sup \{w\textrm{-}\fpd_R(R/I) \mid I~\textrm{is an ideal of}~R \}\\
		&=\sup \{\id_R(A) \mid A~\textrm{is an abosolutely}~w\textrm{-pure}~R\textrm{-module}\}
		\end{align*}
		
	\end{proposition}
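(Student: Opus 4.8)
The plan is to prove the circle of equivalences $(1)\Rightarrow(2)\Rightarrow(3)\Rightarrow(1)$ together with $(1)\Leftrightarrow(4)$, and then read off the displayed formula as an immediate consequence of the equivalence of $(1)$, $(2)$, $(3)$, $(4)$. The implication $(2)\Rightarrow(3)$ is trivial since $R/I$ is an $R$-module, and $(1)\Rightarrow(3)$ follows because $R/I$ is finitely generated and hence $w\textrm{-}\fpd_R(R/I)\leq w\textrm{-}\fpD(R)\leq n$ by definition of $w\textrm{-}\fpD(R)$. For $(3)\Rightarrow(4)$: given an absolutely $w$-pure module $A$, I want $\Ext_R^{n+1}(R/I,A)=0$ for every ideal $I$, since a standard dimension-shifting argument (using the short exact sequences $0\to I\to R\to R/I\to 0$ and the fact that $\Ext_R^{n+1}(R/I,A)\cong\Ext_R^{n+2}(R/J,A)$ for suitable $J$, or more directly the Baer-type criterion) shows that $\Ext_R^{n+1}(R/I,A)=0$ for all $I$ forces $\id_R(A)\leq n$. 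Concretely I would argue: $w\textrm{-}\fpd_R(R/I)\le n$ gives $\Ext_R^{n+1}(R/I,A)=0$ for all ideals $I$ by Definition \ref{defdim}; then by the usual injectivity criterion ($A$ is injective iff $\Ext^1_R(R/I,A)=0$ for all $I$) applied to an $n$-th cosyzygy of $A$, we conclude $\id_R(A)\le n$.

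For $(4)\Rightarrow(2)$: let $M$ be an arbitrary $R$-module and $A$ an absolutely $w$-pure module. Take an injective coresolution and let $A'$ be the $n$-th cosyzygy; by $(4)$, $\id_R(A)\le n$ means $A'$ is injective, so $\Ext_R^{n+1}(M,A)\cong\Ext_R^1(M,A')=0$. This shows $w\textrm{-}\fpd_R(M)\le n$ for every module $M$. Then $(2)\Rightarrow(1)$ is immediate from the definition of $w\textrm{-}\fpD(R)$ as a supremum over finitely generated modules, which is bounded by the supremum over all modules. This closes the loop. Note that coherence of $R$ is \emph{not} needed here (unlike in Proposition \ref{propo13}), because the cosyzygy argument on the injective side works over any ring; this is worth remarking.

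Finally, the displayed three-line formula: the first equality is just the definition combined with $(1)\Leftrightarrow(2)$ — the supremum over finitely generated modules equals the supremum over all modules because any module $M$ with $w\textrm{-}\fpd_R(M)>n$ would, via $(2)$ failing, force $w\textrm{-}\fpD(R)>n$ and conversely. The second equality follows from $(2)\Leftrightarrow(3)$, and the third from $(2)\Leftrightarrow(4)$ (equivalently $(1)\Leftrightarrow(4)$). I would phrase this as: since all four conditions in the proposition are equivalent for every $n\ge 0$, the four quantities $w\textrm{-}\fpD(R)$, $\sup\{w\textrm{-}\fpd_R(M)\}$ over all modules, $\sup\{w\textrm{-}\fpd_R(R/I)\}$ over all ideals, and $\sup\{\id_R(A)\}$ over all absolutely $w$-pure modules all have the same set of upper bounds in $\N\cup\{\infty\}$, hence coincide.

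The main obstacle I anticipate is the direction $(3)\Rightarrow(4)$: translating the vanishing of $\Ext_R^{n+1}(R/I,A)$ for all ideals $I$ into the statement $\id_R(A)\le n$ requires the Baer criterion in the form "an $R$-module $B$ is injective iff $\Ext_R^1(R/I,B)=0$ for every ideal $I$", applied not to $A$ itself but to the $n$-th cosyzygy $A'$ in an injective coresolution $0\to A\to E^0\to\cdots\to E^{n-1}\to A'\to 0$; one needs $\Ext_R^1(R/I,A')\cong\Ext_R^{n+1}(R/I,A)=0$, which is dimension shifting along this coresolution. This is standard but must be stated carefully; everything else is routine dimension-shifting and bookkeeping with suprema.
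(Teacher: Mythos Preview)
Your proposal is correct and follows essentially the same route as the paper: the implications you actually establish are $(2)\Rightarrow(1)$, $(1)\Rightarrow(3)$, $(3)\Rightarrow(4)$ (via Baer's criterion on the $n$-th cosyzygy), and $(4)\Rightarrow(2)$, which is exactly the cycle the paper uses (albeit the paper states $(3)\Rightarrow(4)$ and $(4)\Rightarrow(2)$ more tersely). Note that your opening sentence announces a different plan---the circle $(1)\Rightarrow(2)\Rightarrow(3)\Rightarrow(1)$ plus $(1)\Leftrightarrow(4)$---than the one you actually carry out, so you should rewrite that sentence to match the implications you prove.
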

	\begin{proof}
		$(2)\Rightarrow(1)\Rightarrow(3)$. Trivial.\\
		$(3)\Rightarrow(4)$. Let $A$ be an absolutely $w$-pure $R$-module. For any ideal $I$ of $R$, we have $\Ext_R^{n+1}(R/I,A)=0$. Thus, $\id_{R}(A)\leqslant n$.\\
		$(4)\Rightarrow(2)$. Let $M$ be an $R$-module. For any absolutely $w$-pure $R$-module $A$, we have  $\Ext_R^{n+1}(M,A)=0$. Hence, $w\textrm{-}\fpd(M)\leqslant n$.
	\end{proof}
	Note that Noetherian rings need not to be $DW$ (for example, a regular ring with global dimension 2), and $DW$-rings need not to be Noetherian (for example, a non Noetherian von Neumann regular ring). Next, we show that rings $R$ with $w\textrm{-}\fpD(R)=0$ are exactly Noetherian $DW$-rings.
	\begin{proposition}
		Let $R$ be a ring. Then the following are equivalent:
		\begin{enumerate}
			\item $w\textrm{-}\fpD(R)=0$.
			\item Every $R$-module is $w$-$\FP$-projective.
			\item $R/I$ is $w$-$\FP$-projective for every ideal $I$ of $R$.
			\item Every absolutely $w$-pure $R$-module is injective.
			\item $R$ is Noetherian $DW$-ring.
		\end{enumerate}
	\end{proposition}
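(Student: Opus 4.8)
The plan is to get the equivalences $(1)\Leftrightarrow(2)\Leftrightarrow(3)\Leftrightarrow(4)$ essentially for free from Proposition~\ref{DIMENSION}, and then to close the logical cycle by proving $(2)\Rightarrow(5)$ and $(5)\Rightarrow(1)$. So the real work reduces to those last two implications, and among them the only genuinely new ingredient is the passage from ``every finitely generated module is finitely presented'' to ``$R$ is Noetherian''.

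First I would observe that $(1)\Leftrightarrow(2)\Leftrightarrow(3)\Leftrightarrow(4)$ is just Proposition~\ref{DIMENSION} specialized to $n=0$. Indeed, since $w\textrm{-}\fpD(R)$ and $w\textrm{-}\fpd_R(-)$ are non-negative, ``$\leq 0$'' is the same as ``$=0$''; an $R$-module has $w$-$\FP$-projective dimension $0$ exactly when it is $w$-$\FP$-projective (noted after Definition~\ref{defdim}); and $\id_R(A)\leq 0$ says precisely that $A$ is injective. Reading off the four conditions of Proposition~\ref{DIMENSION} with $n=0$ therefore gives verbatim conditions $(1)$–$(4)$ here, so no extra argument is needed for this part.

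Next I would prove $(2)\Rightarrow(5)$. Assume every $R$-module is $w$-$\FP$-projective. In particular every finitely presented $R$-module is $w$-$\FP$-projective, so $R$ is a $DW$-ring by Proposition~\ref{DW}. It remains to show $R$ is Noetherian. Let $I$ be an arbitrary ideal of $R$; then $R/I$ is finitely generated and, by hypothesis, $w$-$\FP$-projective, hence finitely presented by Remark~\ref{ENOCHSFP}. Applying Schanuel's lemma to the exact sequence $0\to I\to R\to R/I\to 0$ (with $R$ free of rank one) together with a finite presentation $0\to K\to R^{n}\to R/I\to 0$ (so $K$ and $R^{n}$ are finitely generated) yields $I\oplus R^{n}\cong K\oplus R$; since the right-hand side is finitely generated, so is its direct summand $I$. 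Thus every ideal of $R$ is finitely generated, i.e. $R$ is a Noetherian $DW$-ring. This implication is where I expect the only real care to be required, the subtle point being exactly this Schanuel argument (and the trivial fact that a direct summand of a finitely generated module is finitely generated).

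Finally, for $(5)\Rightarrow(1)$: suppose $R$ is a Noetherian $DW$-ring. Because $R$ is a $DW$-ring we have $w\textrm{-}\fpD(R)=\fpD(R)$ (as recorded after Definition~\ref{defdim}), and because $R$ is Noetherian we have $\fpD(R)=0$ by \cite[Proposition~2.6]{MAODING}; hence $w\textrm{-}\fpD(R)=0$, which is $(1)$. This completes the cycle $(1)\Leftrightarrow(2)\Leftrightarrow(3)\Leftrightarrow(4)\Rightarrow(5)\Rightarrow(1)$, and everything except the Noetherianness step in $(2)\Rightarrow(5)$ is pure bookkeeping on top of Propositions~\ref{DW} and~\ref{DIMENSION}, Remark~\ref{ENOCHSFP}, and the inequality $\fpD(R)\leq w\textrm{-}\fpD(R)$.
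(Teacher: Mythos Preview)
Your proof is correct and follows essentially the same architecture as the paper: the equivalence of $(1)$--$(4)$ is read off from Proposition~\ref{DIMENSION} with $n=0$, and the link to $(5)$ comes via Proposition~\ref{DW} together with \cite[Proposition~2.6]{MAODING}.

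The only noteworthy difference is in the Noetherianness part of $(2)\Rightarrow(5)$. The paper simply observes that $w$-$\FP$-projective modules are $\FP$-projective, so hypothesis $(2)$ gives $\fpD(R)=0$, and then invokes \cite[Proposition~2.6]{MAODING} to conclude $R$ is Noetherian. You instead unpack this step by hand: $R/I$ is finitely presented via Remark~\ref{ENOCHSFP}, and then Schanuel's lemma forces $I$ to be finitely generated. Your route is more self-contained (it essentially reproves the relevant half of \cite[Proposition~2.6]{MAODING}), while the paper's citation is shorter; both are perfectly valid.
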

	\begin{proof}
		The equivalence of (1), (2), (3), and (4) follows from Proposition  \ref{DIMENSION}.\\
		$(2)\Leftrightarrow(5)$. Follows from  Proposition \ref{DW} and \cite[Proposition 2.6]{MAODING}.
	\end{proof}
	Recall from\cite{MORSH}, that a ring $R$ is said  $FP$-hereditary if every ideal of $R$ is $FP$-projective. Note that $FP$-hereditary rings need not to be $DW$ (for example, a non $DW$ Noetherian ring), and $DW$-rings need not to be $FP$-hereditary (for example, a non coherent $DW$-ring). Next, we show that rings $R$ with $w\textrm{-}\fpD(R)\leq 1$ are exactly $FP$-hereditary $DW$-rings.
	\begin{proposition}
		Let $R$ be a ring. Then the following are equivalent:
		\begin{enumerate}
			\item $w\textrm{-}\fpD(R)\leq 1$.
			\item Every submodule of $w$-$\FP$-projective $R$-module is $w$-$\FP$-projective.
			\item Every submodule of projective $R$-module is $w$-$\FP$-projective.
			\item $I$ is $w$-$\FP$-projective  for every ideal $I$ of $R$.
			\item $\id_{R}(A)\leq 1$ for all absolutely $w$-pure $R$-module $A$.
			\item $R$ is  a (coherent) $FP$-hereditary $DW$-ring.
		\end{enumerate}
	\end{proposition}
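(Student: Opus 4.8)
The plan is to prove the loop $(1)\Leftrightarrow(5)$ together with $(5)\Rightarrow(2)\Rightarrow(3)\Rightarrow(4)\Rightarrow(5)$, and then the pair $(4)\Leftrightarrow(6)$; together these give the equivalence of all six conditions. The equivalence $(1)\Leftrightarrow(5)$ costs nothing: it is exactly Proposition \ref{DIMENSION} read with $n=1$, since $(5)$ is the assertion that $\id_R(A)\le 1$ for every absolutely $w$-pure $R$-module $A$.

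For the block $(5)\Rightarrow(2)\Rightarrow(3)\Rightarrow(4)\Rightarrow(5)$ I would simply chase long exact $\Ext$-sequences. Given $(5)$ and a submodule $N$ of a $w$-$\FP$-projective module $M$, apply $\Hom_R(-,A)$ to $0\to N\to M\to M/N\to 0$ for an absolutely $w$-pure $A$: in $\Ext_R^1(M,A)\to\Ext_R^1(N,A)\to\Ext_R^2(M/N,A)$ the left term is $0$ because $M$ is $w$-$\FP$-projective and the right term is $0$ because $\id_R(A)\le 1$, so $\Ext_R^1(N,A)=0$ and $N$ is $w$-$\FP$-projective; this is $(5)\Rightarrow(2)$. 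Then $(2)\Rightarrow(3)$ holds since projective modules are $w$-$\FP$-projective, and $(3)\Rightarrow(4)$ since an ideal is a submodule of the projective module $R$. For $(4)\Rightarrow(5)$, apply $\Hom_R(-,A)$ to $0\to I\to R\to R/I\to 0$ for an arbitrary ideal $I$: the segment $\Ext_R^1(I,A)\to\Ext_R^2(R/I,A)\to\Ext_R^2(R,A)$ has both outer terms $0$ (the first by $(4)$, the second because $R$ is projective), so $\Ext_R^2(R/I,A)=0$ for every ideal $I$ and every absolutely $w$-pure $A$, and Baer's criterion for injective dimension (the very step used in Proposition \ref{DIMENSION}) yields $\id_R(A)\le 1$.

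It remains to bring in $(6)$. For $(6)\Rightarrow(4)$: over a $DW$-ring the $w$-$\FP$-projective modules coincide with the $\FP$-projective modules (the observation following Definition \ref{w-FP-proj}), so if $R$ is FP-hereditary and $DW$ then every ideal, being $\FP$-projective, is $w$-$\FP$-projective. For $(4)\Rightarrow(6)$: condition $(4)$ in particular makes every finitely generated ideal $w$-$\FP$-projective, so Proposition \ref{COHERENTDW} forces $R$ to be a coherent $DW$-ring; using the coincidence of the two classes of modules over a $DW$-ring once more, $(4)$ then reads exactly as ``every ideal is $\FP$-projective'', i.e.\ $R$ is FP-hereditary. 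I expect this last implication to be the only genuinely non-formal point: the passage ``every finitely generated ideal is $w$-$\FP$-projective $\Rightarrow$ $R$ is coherent $DW$'' is not automatic but rests on the flatness argument inside Proposition \ref{COHERENTDW} --- a hypothetical non-$w$ maximal ideal $\m$ would make $R/\m$ absolutely $w$-pure with $\m_w=R$, the vanishing of $\Ext_R^2(R/I,R/\m)$ for finitely generated $I$ would force $\fd_R(R/\m)\le 1$ via \cite[Lemma 3.1]{DING4}, and then $\m$ would be flat, hence a $w$-ideal, a contradiction. Everything else is routine diagram-chasing on top of Propositions \ref{DIMENSION} and \ref{COHERENTDW}.
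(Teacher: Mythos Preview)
Your proof is correct and follows essentially the same skeleton as the paper's: the equivalence $(1)\Leftrightarrow(5)$ via Proposition \ref{DIMENSION}, the chain $(2)\Rightarrow(3)\Rightarrow(4)$, the passage $(4)\Rightarrow(5)$ by the long exact sequence, and the use of Proposition \ref{COHERENTDW} to extract the $DW$ property in $(4)\Rightarrow(6)$ are all exactly as in the paper.

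There is one small but genuine difference worth noting. The paper closes its cycle by proving $(6)\Rightarrow(2)$ via the external reference \cite[Theorem 3.16]{MORSH} (which says that over an $\FP$-hereditary ring every submodule of an $\FP$-projective module is $\FP$-projective), combined with the identification of $w$-$\FP$-projective and $\FP$-projective modules over a $DW$-ring. You instead prove $(5)\Rightarrow(2)$ directly: given $\id_R(A)\le 1$, the segment $\Ext_R^1(M,A)\to\Ext_R^1(N,A)\to\Ext_R^2(M/N,A)$ kills $\Ext_R^1(N,A)$. This is more self-contained and avoids the outside citation; on the other hand, the paper's route makes explicit that condition $(2)$ is really just the known $\FP$-hereditary result transported through the $DW$ identification. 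Your handling of $(4)\Rightarrow(6)$ (first coherent $DW$ via Proposition \ref{COHERENTDW}, then $\FP$-hereditary by reading $(4)$ through the $DW$ dictionary) is also a slight reordering of the paper's argument (which first observes $\FP$-hereditary from the inclusion $\{w\text{-}\FP\text{-projective}\}\subseteq\{\FP\text{-projective}\}$, then applies Proposition \ref{COHERENTDW}); both orders are fine.
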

	\begin{proof} The implications $(2)\Rightarrow (3)\Rightarrow (4)$. are obvious. \\
		$(1)\Leftrightarrow (5)$. By Proposition \ref{DIMENSION}.\\
		$(4)\Rightarrow (5)$. Let $A$ be an absolutely $w$-pure $R$-module and $I$ be an ideal of $R$. The exact sequence $0\rightarrow I\rightarrow R\rightarrow R/I\rightarrow 0$ gives rise to the exact sequence
		$$0=\Ext_R^{1}(I,A)\rightarrow \Ext_R^{2}(R/I,A)\rightarrow \Ext_R^{2}(R,A)=0.$$
		Thus, $\Ext_R^{2}(R/I,A)=0$, and so $\id_{R}(A)\leq 1$.\\
		$(5)\Rightarrow (4)$. Let $I$ be an ideal of $R$. For any absolutely $w$-pure $R$-module $A$, we have
		$$0=\Ext_R^{2}(R/I,A)=\Ext_R^{1}(I,A).$$
		Thus, $I$ is $w$-$\FP$-projective.\\
		$(4)\Rightarrow (6)$. By hypothesis, $R$ is  $FP$-hereditary. Now, by
		Proposition \ref{COHERENTDW} $R$ is a coherent $DW$-ring.\\
		$(6)\Rightarrow(2)$. By \cite[Theorem 3.16]{MORSH}, since the $w$-$\FP$-projective $R$-modules are just the $\FP$-projective $R$-modules over a $DW$-ring.\\
		
	\end{proof}

\end{document}